\newtheorem*{maintheorem*}{Main Theorem}
\newtheorem{theorem}{Theorem}[section]
\newtheorem{prop}[theorem]{Proposition}
\newtheorem{conj}[theorem]{Conjecture}
\newtheorem{lemma}[theorem]{Lemma}
\newtheorem{cor}[theorem]{Corollary}
\theoremstyle{definition}
\newtheorem{definition}[theorem]{Definition}
\newtheorem{remark}[theorem]{Remark}
\newtheorem{example}[theorem]{Example}
\numberwithin{equation}{section}
\newcommand{\nn}{\mathbb{N}}
\newcommand{\qq}{\mathbb{Q}}
\newcommand{\rr}{\mathbb{R}}
\newcommand{\zz}{\mathbb{Z}}
\newcommand\pval{\mathsf{v}_p}
\keywords{Puiseux monoids, system of sets of lengths, sets of lengths, realization theorem, characterization problem, atomic monoids}
\begin{document}
	
	\mbox{}
	\title{Systems of sets of lengths of Puiseux monoids}
	\author{Felix Gotti}
	\address{Department of Mathematics\\UC Berkeley\\Berkeley, CA 94720}
	\email{felixgotti@berkeley.edu}
	\date{\today}
	
	\begin{abstract}
		In this paper we study the system of sets of lengths of non-finitely generated atomic Puiseux monoids (a Puiseux monoid is an additive submonoid of $\qq_{\ge 0}$). We begin by presenting a BF-monoid $M$ with full system of sets of lengths, which means that for each subset $S$ of $\zz_{\ge 2}$ there exists an element $x \in M$ whose set of lengths $\mathsf{L}(x)$ is $S$. It is well known that systems of sets of lengths do not characterize numerical monoids. Here, we prove that systems of sets of lengths do not characterize non-finitely generated atomic Puiseux monoids. In a recent paper, Geroldinger and Schmid found the intersection of systems of sets of lengths of numerical monoids. Motivated by this, we extend their result to the setting of atomic Puiseux monoids. Finally, we relate the sets of lengths of the Puiseux monoid $P = \langle 1/p \mid p \ \text{is prime} \rangle$ with the Goldbach's conjecture; in particular, we show that $\mathsf{L}(2)$ is precisely the set of Goldbach's numbers.
	\end{abstract}

\maketitle

\vspace{-6pt}

\section{Introduction} \label{sec:intro}

Factorization theory originated from algebraic number theory, in particular, from the fact that the ring of integers $\mathcal{O}_K$ of an algebraic number field $K$ fails in general to be factorial. An integral domain is called half-factorial if any two factorizations of the same element involve the same number of irreducibles. In the 1950's, L.~Carlitz \cite{lC60} proved that a ring of integers $\mathcal{O}_K$ is half-factorial if and only if its class group $\mathcal{C}(\mathcal{O}_K)$ has order at most $2$. He also proposed to study further connections between the phenomenon of non-unique factorizations of $\mathcal{O}_K$ and the structure of $\mathcal{C}(\mathcal{O}_K)$. In general, many algebraic invariants of Noetherian domains can also be used to understand to which extent such integral domains fail to be factorial.

If the set $\mathsf{Z}(x)$ of factorizations into irreducibles of a given element $x$ in an integral domain is not a singleton, many interesting questions naturally emerge to understand how bizarre $\mathsf{Z}(x)$ might be. For instance, what is the size of $\mathsf{Z}(x)$ or what is the subset $\mathsf{L}(x) \subset \nn$ consisting of the lengths of elements of $\mathsf{Z}(x)$? Or, perhaps, how similar or close are two given factorizations in $\mathsf{Z}(x)$? Many statistics and algebraic invariants have been introduced to answer these and other similar questions induced by the phenomenon of non-unique factorizations. The study of such algebraic invariants in integral domains (see \cite{dD96}) and, more recently, in the abstract setting of commutative cancellative monoids (see \cite{aG16} and references therein) have given shape to the modern factorization theory.

Perhaps the most investigated factorization invariant is the system of sets of lengths. If $M$ is a commutative cancellative monoid and $x \in M$ can be written as a product of $n$ irreducibles, then $n$ is called a length of $x$, and the set $\mathsf{L}(x)$ comprising all the lengths of $x$ is called the set of lengths of $x$. In addition, the set $\{\mathsf{L}(x) \mid x \in M\}$ is called the system of sets of lengths of $M$. The reader might have noticed that if $M$ is factorial, then $|\mathsf{L}(x)| = 1$ for all $x \in M$. Clearly, $|\mathsf{L}(x)| = 1$ for all $x \in M$ means that $M$ is half-factorial.

An extensive family of commutative cancellative monoids with a wild atomic structure and a rich factorization theory is hidden inside the set of nonnegative rational numbers. The members of this family, additive submonoids of $\qq_{\ge 0}$, are called Puiseux monoids. The atomic structure of Puiseux monoids has only been studied recently (see \cite{fG17a}, \cite{fG17b}, \cite{GG17}). In the present manuscript, we study the system of sets of lengths of non-finitely generated atomic Puiseux monoids.

We begin the next section by establishing the notation of commutative semigroup theory we shall be using later. Then we recall the concepts of factorization theory that are required to follow this paper. In particular, we formally define what a factorization is, and we introduce the factorization invariants that will play some role in the following sections. Finally, a brief insight into the atomic structure of Puiseux monoids is provided.

In Section~\ref{sec:BF monoid with full elasticity}, we begin our journey into the system of sets of lengths of Puiseux monoids. It was recently proved in \cite{GS17} that for each $S \subseteq \zz_{\ge 2}$ we can find a numerical monoid $N$ and $x \in N$ with $\mathsf{L}(x) = S$. We study the same question, but in the setting of non-finitely generated Puiseux monoids. As a result, we construct a BF-monoid $M$ whose system of sets of lengths is as large as we can expect, i.e., for each $S \subseteq \zz_{\ge 2}$ there exists $x \in M$ such that $\mathsf{L}(x) = S$. Note that in our setting the monoid does not depend on the choice of the set $S$.

Can we characterize the members of certain given family of atomic monoids by their systems of sets of lengths? This is an important question in factorization theory, which is known as the Characterization Problem. For example, it is conjectured that Krull monoids over finite abelian groups with prime divisors in all classes whose Davenport constant is at least $4$ can be characterized by their systems of sets of lengths. On the other hand, it was proved in \cite{ACHP07} that systems of sets of lengths do not characterize numerical monoids. In Section~\ref{sec:characterization problem}, we show that non-finitely generated Puiseux monoids cannot be characterized by their systems of sets of lengths.

In Section~\ref{sec:intersections of sets of lengths}, we construct an atomic Puiseux monoid $M$ that is completely non-half-factorial, meaning that each $x \in M \setminus \{0\}$ that is not irreducible satisfies $|\mathsf{L}(x)| \ge 2$. Then, motivated by \cite[Section~4]{GS17}, we study the intersection of systems of sets of lengths of atomic Puiseux monoids. The construction of a completely non-half-factorial Puiseux monoid will allow us to give a version of \cite[Theorem~4.1]{GS17} in the setting of atomic Puiseux monoids.

As most of the results in this paper are about cardinality restrictions and partial descriptions of sets of lengths rather than their explicit determination, we feel the need to argue how complex are explicit computations of sets of lengths, even for the simplest atomic Puiseux monoids. Thus, in Section~\ref{sec:Goldbach conjecture} we show that in the elementary Puiseux monoid $\langle 1/p \mid p \text{ is prime} \rangle$ the set of lengths $\mathsf{L}(2)$ is precisely the set of Goldbach's numbers. This, in particular, implies that an explicit description of $\mathsf{L}(2)$ is as hard as the Goldbach's conjecture.\\

\section{Background} \label{sec:background}

To begin with let us introduce the fundamental concepts related to our exposition as an excuse to establish the notation we need. The reader can consult Grillet \cite{pG01} for information on commutative semigroups and Geroldinger and Halter-Koch \cite{GH06b} for extensive background in non-unique factorization theory of commutative domains and monoids. These two references also provide definitions for most of the undefined terms we mention here.

Throughout this sequel, we let $\mathbb{N}$ denote the set of positive integers, and we set $\nn_0 := \nn \cup \{0\}$. For $X \subseteq \rr$ and $r \in \rr$, we set $X_{\le r} := \{x \in X \mid x \le r\}$; with a similar spirit we use the symbols $X_{\ge r}$, $X_{< r}$, and $X_{> r}$. If $q \in \qq_{> 0}$, then we call the unique $a,b \in \nn$ such that $q = a/b$ and $\gcd(a,b)=1$ the \emph{numerator} and \emph{denominator} of $q$ and denote them by $\mathsf{n}(q)$ and $\mathsf{d}(q)$, respectively.

The unadorned term \emph{monoid} always means commutative cancellative monoid. As most monoids here is assumed to be commutative, unless otherwise specified we will use additive notation. We let $M^\bullet$ denote the set $M \! \setminus \! \{0\}$. For $a,c \in M$, we say that $a$ \emph{divides} $c$ \emph{in} $M$ and write $a \mid_M c$ provided that $c = a + b$ for some $b \in M$. We write $M = \langle S \rangle$ when $M$ is generated by a set $S$. We say that $M$ is \emph{finitely generated} if it can be generated by a finite set; otherwise, $M$ is said to be \emph{non-finitely generated}.

A non-invertible element $a \in M$ is an \emph{atom} (or \emph{irreducible}) if for each pair of elements $u,v \in M$ such that $a = u + v$ either $u$ or $v$ is invertible. Let $\mathcal{A}(M)$ denote the set of atoms of $M$. Every monoid $M$ in this paper will be \emph{reduced}, which means that $0$ is the only invertible element of $M$. This clearly implies that $\mathcal{A}(M)$ will be contained in each generating set of $M$. If $\mathcal{A}(M)$ generates $M$, then $M$ is said to be \emph{atomic}. Monoids addressed in this article are all atomic.
We say that a multiplicative monoid $F$ is \emph{free abelian} on $P \subset F$ if every element $a \in F$ can be written uniquely in the form
\[
	a = \prod_{p \in P} p^{\pval(a)},
\]
where $\pval(a) \in \nn_0$ and $\pval(a) > 0$ only for finitely many elements $p \in P$. It is well known that for each set $P$, there exists a unique (up to canonical isomorphism) monoid $F$ such that $F$ is free abelian on $P$. When we want to emphasize the relation between $P$ and $F$, we denote $F$ by $\mathcal{F}(P)$. It follows by the fundamental theorem of arithmetic that the multiplicative monoid $\nn$ is free abelian on the set of prime numbers. In this case, we can extend $\pval$ to $\qq_{\ge 0}$ as follows. For $r \in \qq_{> 0}$ let $\pval(r) := \pval(\mathsf{n}(r)) - \pval(\mathsf{d}(r))$ and set $\pval(0) = \infty$. The map $\pval \colon \qq_{\ge 0} \to \zz$, called the $p$-\emph{adic valuation} on $\qq_{\ge 0}$, satisfies the following two properties:
\begin{eqnarray}
	&& \pval(rs) = \pval(r) + \pval(s) \ \ \text{for all} \ \ r,s \in \qq_{\ge 0}; \vspace{10pt} \\
	&& \pval(r + s) \ge \min \{ \pval(r), \pval(s)\} \ \ \text{for all} \ \ r,s \in \qq_{\ge 0}. \vspace{3pt}
\end{eqnarray}

The free abelian monoid on $\mathcal{A}(M)$, denoted by $\mathsf{Z}(M)$, is called the \emph{factorization monoid} of $M$, and the elements of $\mathsf{Z}(M)$ are called \emph{factorizations}. If $z = a_1 \dots a_n$ is a factorization in $\mathsf{Z}(M)$ for some $n \in \nn_0$ and $a_1, \dots, a_n \in \mathcal{A}(M)$, then $n$ is called the \emph{length} of $z$ and is denoted by $|z|$. The unique homomorphism
\[
	\phi \colon \mathsf{Z}(M) \to M \ \ \text{satisfying} \ \ \phi(a) = a \ \ \text{for all} \ \ a \in \mathcal{A}(M)
\]
is called the \emph{factorization homomorphism} of $M$, and for each $x \in M$ the set
\[
	\mathsf{Z}(x) := \phi^{-1}(x) \subseteq \mathsf{Z}(M)
\]
is called the \emph{set of factorizations} of $x$. By definition, we set $\mathsf{Z}(0) = \{0\}$. Note that the monoid $M$ is atomic if and only if $\mathsf{Z}(x)$ is nonempty for all $x \in M$. For each $x \in M$, the \emph{set of lengths} of $x$ is defined by
\[
	\mathsf{L}(x) := \{|z| \ | \ z \in \mathsf{Z}(x)\}.
\]
A monoid $M$ is \emph{half-factorial} if $|\mathsf{L}(x)| = 1$ for all $x \in M$. On the other hand, we say that the monoid $M$ is \emph{completely non-half-factorial} if $|\mathsf{L}(x)| = \infty$ for all $x \in M^\bullet \setminus \mathcal{A}(M)$. If $\mathsf{L}(x)$ is a finite set for all $x \in M$, then we say that $M$ is a BF-\emph{monoid}. The \emph{system of sets of lengths} of $M$ is defined by
\[
	\mathcal{L}(M) := \{\mathsf{L}(x) \mid x \in M\}.
\]
In \cite{aG16} the interested reader can find a friendly introduction to sets of lengths and the role they play in factorization theory. In general, sets of lengths and systems of sets of lengths are factorization invariants of atomic monoids that have received significant attention in recent years (see, for instance, \cite{ACHP07,FG08,GS16}). 

A very special family of atomic monoids is that one comprising all \emph{numerical monoids}, cofinite submonoids of $\nn_0$. Each numerical monoid $N$ has a unique minimal set of generators, which is finite; such a unique minimal generating set is precisely $\mathcal{A}(N)$. As a result, every numerical monoid is atomic and contains only finitely many atoms. An introduction to the realm of numerical monoids can be found in~\cite{GR09}.

Recall that an additive submonoid of $\qq_{\ge 0}$ is called a \emph{Puiseux monoid}. Puiseux monoids are a natural generalization of numerical monoids. However, in general, the atomic structure of Puiseux monoids differs significantly from that one of numerical monoids. Puiseux monoids are not always atomic; for instance, consider $\langle 1/2^n \mid n \in \nn\rangle$. On the other hand, if an atomic Puiseux monoid $M$ is not isomorphic to a numerical monoid, then $\mathcal{A}(M)$ is infinite. It is also useful to know that if a Puiseux monoid does not contain $0$ as a limit point, then it is atomic. Indeed, the following stronger statement, which is a direct consequence of \cite[Theorem~4.5]{fG16}, holds.

\begin{theorem} \label{theo:sufficient condition for atomicity}
	Let $M$ be a Puiseux monoid. If $0$ is not a limit point of $M$, then $M$ is a \emph{BF}-monoid.
\end{theorem}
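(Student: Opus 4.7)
The plan is to exploit the hypothesis by choosing $\epsilon > 0$ small enough that $M \cap (0, \epsilon) = \emptyset$; such an $\epsilon$ exists precisely because $0$ is not a limit point of $M$. This immediately gives a uniform lower bound $y \ge \epsilon$ for every $y \in M^\bullet$, and in particular for every atom of $M$. The rest of the argument is then a matter of turning this positive lower bound into a bound on the length of any decomposition.

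First I would establish atomicity of $M$. Given $x \in M^\bullet$, consider the set of positive integers $k$ such that $x = y_1 + \cdots + y_k$ with each $y_i \in M^\bullet$. This set is nonempty (take $k=1$) and, because each $y_i \ge \epsilon$, bounded above by $x/\epsilon$. I would pick a decomposition realizing the maximum value of $k$ and argue that each summand $y_i$ must be an atom: otherwise, writing $y_i = u + v$ with $u, v \in M^\bullet$ would produce a strictly longer decomposition of $x$, contradicting maximality. Hence every nonzero $x$ lies in $\langle \mathcal{A}(M) \rangle$, so $M$ is atomic.

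Next I would verify the BF condition. For any $x \in M$ and any factorization $z = a_1 \cdots a_n \in \mathsf{Z}(x)$ with $a_i \in \mathcal{A}(M)$, applying the factorization homomorphism $\phi$ yields $x = a_1 + \cdots + a_n \ge n\epsilon$, so $n \le x/\epsilon$. Consequently $\mathsf{L}(x) \subseteq \{0, 1, \ldots, \lfloor x/\epsilon \rfloor\}$ is a finite set, and $M$ is a BF-monoid.

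There is really no significant obstacle here; the only conceptual step is recognizing that the phrase ``$0$ is not a limit point'' is equivalent to the existence of a uniform positive lower bound on $M^\bullet$, after which both the atomicity statement and the finiteness of $\mathsf{L}(x)$ follow from the same pigeonhole-style bound on the number of positive summands in a decomposition of $x$.
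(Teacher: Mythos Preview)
Your argument is correct. The key observation that ``$0$ is not a limit point of $M$'' yields a uniform lower bound $\epsilon > 0$ on $M^\bullet$ is exactly right, and both the atomicity step (via a maximal-length decomposition) and the BF step (via the bound $|z| \le x/\epsilon$) go through without difficulty in this reduced, additive setting.

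By way of comparison: the paper does not actually prove this statement in-line. It records the theorem as a direct consequence of \cite[Theorem~4.5]{fG16}, a more general result asserting that increasing positive monoids of ordered fields are FF-monoids (hence BF-monoids). Your route is more elementary and entirely self-contained, tailored to the concrete situation of Puiseux monoids inside $\qq_{\ge 0}$; the cited result, on the other hand, sits in a broader framework and yields the stronger FF property (finiteness of $\mathsf{Z}(x)$, not just of $\mathsf{L}(x)$). For the purposes of this paper---where only the BF conclusion is ever invoked---your direct argument is perfectly adequate and arguably preferable for readability.
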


The atomic structure and factorization theory of Puiseux monoids has only been studied recently (see \cite{fG17b} and references therein). \\

\section{A BF-Puiseux monoid with full system of sets of lengths} \label{sec:BF monoid with full elasticity}

Given a factorization invariant $\mathsf{f}$ of atomic monoids (resp., of elements of atomic monoids) and certain class of atomic monoids $\mathcal{C}$, the question of whether there exists $M \in \mathcal{C}$ (resp., $M \in \mathcal{C}$ and $x \in M$) such that $\mathsf{f}(M)$ (resp., $\mathsf{f}_M(x)$) equals some prescribed value is called a \emph{realization problem}. Besides the sets of lengths, there are many factorization invariants, including the set of distances and the catenary degree (definitions can be found in~\cite{GH06b}), for which the realization problem restricted to several classes of atomic monoids have been studied lately. Indeed, theorems in this direction have been established in \cite{CGP14,GS17a,OP17,wS09}.

In this section, we study a realization problem when the factorization invariant $\mathsf{f}$ is the system of sets of lengths and the class $\mathcal{C}$ is that one comprising all Puiseux monoids which are also BF-monoids. We take our prescribed value to be the collection of sets
\[
	\mathcal{S} = \big\{\{0\}, \{1\}, S \mid S \subseteq \zz_{\ge 2} \ \text{and} \ |S| < \infty \big\}.
\]
As the only nonzero elements of an atomic monoid $M$ having factorizations of length~$1$ are the atoms, it follows that $\mathcal{L}(M) \subseteq \{\{0\}, \{1\}, S \mid S \subseteq \zz_{\ge 2}\}$. Therefore, when $M$ is a BF-monoid we obtain that $\mathcal{L}(M) \subseteq \mathcal{S}$.

\begin{definition}
	We say that a BF-monoid $M$ has \emph{full system of sets of lengths} provided that $\mathcal{L}(M) = \mathcal{S}$.
\end{definition}

We positively answer our realization question by constructing (in the proof of Theorem~\ref{thm:PM with full system of sets of lengths}) a Puiseux monoid with full system of sets of lengths. Note that families of monoids and domains having full systems of sets of lengths have been found and studied before. It was proved by Kainrath \cite{fK99} that Krull monoids having infinite class groups with primes in each class have full systems of sets of lengths. On the other hand, Frish \cite{sF13} proved that the subdomain $\text{Int}(\zz)$ of $\zz[x]$ also has full system of sets of lenghts; this result has been recently generalized \cite{FNR17}, as we show in Example~\ref{ex:integer-valued polynomials have FSSL}.

\begin{example} \label{ex:Krull monoids with cyclic class groups and a prime in each class have FSSL}
	Let $M$ be Krull monoid, and let $G$ be the class group of $M$. Suppose that $G$ is infinite and that every class contains at least a prime. Therefore for each nonempty finite subset $L$ of $\zz_{\ge 2}$ and every function $f \colon L \to \nn$, there exists $x \in M$ satisfying the following two conditions:
	\begin{enumerate}
		\item $\mathsf{L}(x) = L$, and \vspace{1pt}
		\item $|\{z \in \mathsf{Z}(x) \ | \ |z| = k\}| \ge f(k)$ for each $k \in L$.
	\end{enumerate}
	In particular, $M$ has full system of sets of lengths. In this example we illustrate a simplified version of \cite[Theorem~1]{fK99}. We refer the reader to \cite[Section~3]{aG16} not only for the definition of Krull monoids but also for a variety of examples showing up in diverse areas of mathematics.
\end{example}

\begin{example} \cite[Theorem~4.1]{FNR17} \label{ex:integer-valued polynomials have FSSL}
	Let $\mathcal{O}_K$ be the ring of integers of a given number field $K$. In addition, take $m_1, \dots, m_n \in \nn$ such that $m_1 < \dots <  m_n$. Let $\text{Int}(\mathcal{O}_K)$ denote the subring of integer-valued polynomials of $K[x]$ (i.e., the subring of polynomials of $K[x]$ stabilizing $\mathcal{O}_K$). Then there exists $p(x) \in \text{Int}(\mathcal{O}_K)$ and $z_1, \dots, z_n \in \mathsf{Z}(p(x))$ satisfying that $|z_i| = m_i + 1$ for each $i = 1, \dots, n$. As a result, the domain $\text{Int}(\mathcal{O}_K)$ has full system of sets of lengths.
\end{example}

The following result, which is a crucial tool in our construction, is a simplified version of a recent realization theorem by Geroldinger and Schmid.

\begin{theorem} \cite[Theorem~3.3]{GS17} \label{thm:simplified version of set of length realizability for NS}
	For every nonempty finite subset $S$ of $\zz_{\ge 2}$, there exists a numerical monoid $N$ and $x \in N$ such that $\mathsf{L}(x) = S$.
\end{theorem}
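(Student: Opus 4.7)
Write $S = \{s_1 < s_2 < \cdots < s_k\}$ with $s_1 \ge 2$, and let $L = \operatorname{lcm}(s_1, \ldots, s_k)$. The plan is to construct a numerical monoid $N$ and a distinguished element $x \in N$ such that the factorizations of $x$ realize precisely the lengths $s_1, \ldots, s_k$, by designing atoms whose factorization equations reduce to a Diophantine system with exactly the right solution lengths.

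The naive first attempt is to take atoms $a_i = L/s_i$ inside $\nn_0$ and set $x = L$, which immediately yields $x = s_i\,a_i$ and thus $S \subseteq \mathsf{L}(x)$ in the submonoid $\langle a_1, \ldots, a_k \rangle$ of $\nn_0$. However, this construction has two defects: the $a_i$ are not all atoms (whenever $s_j \mid s_i$ one has $a_j \mid a_i$), and mixed representations $\sum c_i a_i = L$ may produce lengths outside $S$. I would repair both defects by passing to a ``shifted'' monoid: choose a large parameter $n$ (much larger than $L$), replace each $a_i$ by $\tilde{a}_i = n + b_i$ for carefully chosen offsets $b_i$ encoding the original $a_i$, and adjust the target to an element $\tilde x$ so that the factorizations of $x$ lift to factorizations of $\tilde x$. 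Once $n \gg \max b_i$, each $\tilde{a}_i$ is automatically an atom of the monoid they generate (any sum of two or more $\tilde{a}_j$ already exceeds $\tilde{a}_i$), and any factorization of $\tilde x$ has length in a narrow window around $\tilde x/n$, pinning the candidates for $\mathsf{L}(\tilde x)$ down to a handful of integers.

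The main obstacle is the reverse inclusion $\mathsf{L}(\tilde x) \subseteq S$: ensuring that every length in the narrow window actually lies in $S$. I would handle this via a congruence argument: choose the offsets $b_i$ so that the system $\sum c_i \tilde{a}_i = \tilde x$ with $\sum c_i = \ell$ reduces, modulo a large prime $p$ chosen as part of the construction, to a linear condition on $\ell$ whose non-negative integer solutions with $\sum c_i$ in the admissible window are exactly $\ell \in S$. With both $n$ and $p$ sufficiently large and the $b_i$'s placed in appropriately spaced residue classes modulo $p$, this congruence obstruction combines with the window bound to force $\mathsf{L}(\tilde x) = S$. All remaining checks — atomicity of the $\tilde{a}_i$, production of a factorization of length $s_i$ for each $i$, and the window bound itself — are then routine verifications once the parameters $n$, $p$, and $(b_i)$ are fixed. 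The essential subtlety lies in simultaneously meeting the combinatorial constraints that ensure \emph{both} the desired lengths are present and the undesired ones are excluded; I expect that this is exactly where the construction of Geroldinger and Schmid concentrates its technical work.
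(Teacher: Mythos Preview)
The paper does not actually prove this statement: it is quoted verbatim as \cite[Theorem~3.3]{GS17} and used as a black box in the proof of Theorem~\ref{thm:PM with full system of sets of lengths}. There is therefore no ``paper's own proof'' against which to compare your attempt.

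That said, a brief remark on your sketch. Your overall architecture --- start from the naive generators $L/s_i$, then shift by a large parameter $n$ to force atomicity and confine factorization lengths to a narrow window, and finally impose a congruence obstruction to kill unwanted lengths --- is in the right spirit and is close to what Geroldinger and Schmid actually do. However, your proposal is a plan rather than a proof: the crucial step, namely the choice of offsets $b_i$ and the prime $p$ so that the only solutions of $\sum c_i \tilde a_i = \tilde x$ with $\sum c_i$ in the admissible window have $\sum c_i \in S$, is asserted rather than carried out. This is precisely the nontrivial combinatorial core of the argument, and vague appeals to ``appropriately spaced residue classes'' do not substitute for an explicit construction and verification. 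In the original \cite{GS17} this step is handled by a concrete choice of generators (built from a parameter $m$ and the target lengths) together with an explicit computation of all factorizations of the distinguished element; until you specify your $b_i$, $p$, $n$, and $\tilde x$ and check both inclusions directly, the reverse inclusion $\mathsf L(\tilde x)\subseteq S$ remains a genuine gap.
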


Theorem~\ref{thm:simplified version of set of length realizability for NS} implies, in particular, that every nonempty finite subset $S$ of $\zz_{\ge 2}$ can be realized as the set of lengths of an element inside certain Puiseux monoid. In principle, the choices of both the Puiseux monoid and the element depend on the set $S$. However, the existence of a Puiseux monoid with full system of sets of lengths will eliminate the former of these two dependences. We will create such a Puiseux monoid by "gluing" together a countable family of numerical monoids, each of them containing an element whose set of lengths is a specified finite subset of $\zz_{\ge 2}$. Clearly, we should glue the numerical monoids carefully enough so that none of the specified sets of lengths is lost in the process. First, let us state the next lemma.

\begin{lemma} \label{lem:scaling numerical semigroups preserves lengths}
	If $N$ is a submonoid of $(\nn_0, +)$ and $q \in \qq_{> 0}$, then $qN$ is a finitely generated (Puiseux) monoid satisfying that $\mathsf{L}_{N}(x) = \mathsf{L}_{qN}(qx)$ for every $x \in N$.
\end{lemma}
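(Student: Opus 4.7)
The plan is to exploit the fact that scalar multiplication by $q$ is a monoid isomorphism between $N$ and $qN$, which preserves atoms and hence lengths of factorizations.

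First I would establish that $qN$ is finitely generated. A classical fact (which follows from the observation that dividing all elements of a nontrivial submonoid of $\nn_0$ by their $\gcd$ yields a numerical monoid, together with the well-known finite generation of numerical monoids) gives that $N$ itself is finitely generated, say $N = \langle n_1, \dots, n_k \rangle$. Then, since the map $\mu_q \colon N \to qN$ defined by $\mu_q(x) = qx$ is a surjective monoid homomorphism, we get $qN = \langle qn_1, \dots, qn_k \rangle$, and in particular $qN$ is a finitely generated submonoid of $\qq_{\ge 0}$, i.e., a finitely generated Puiseux monoid.

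Next I would verify that $\mu_q$ is actually a monoid isomorphism: it is clearly an additive homomorphism, it is injective because $q \neq 0$ in $\qq$, and it is surjective by the very definition of $qN$. Because both $N$ and $qN$ are reduced (the former being a submonoid of $\nn_0$ and the latter of $\qq_{\ge 0}$), $\mu_q$ maps invertible elements to invertible elements and, more importantly, restricts to a bijection $\mathcal{A}(N) \to \mathcal{A}(qN)$: if $a \in \mathcal{A}(N)$ and $qa = u + v$ in $qN$ with $u,v \in qN$, then $a = \mu_q^{-1}(u) + \mu_q^{-1}(v)$ in $N$, forcing $\mu_q^{-1}(u)$ or $\mu_q^{-1}(v)$ to be $0$, hence $u$ or $v$ to be $0$; the reverse implication is analogous.

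Finally, the isomorphism $\mu_q$ lifts canonically to a length-preserving isomorphism of the free abelian monoids $\mathsf{Z}(N) \to \mathsf{Z}(qN)$, sending a factorization $a_1 + \dots + a_n$ in $N$ (with $a_i \in \mathcal{A}(N)$) to the factorization $qa_1 + \dots + qa_n$ in $qN$. Under this correspondence, the factorizations of $x \in N$ are in length-preserving bijection with those of $qx \in qN$, yielding $\mathsf{L}_N(x) = \mathsf{L}_{qN}(qx)$ for every $x \in N$. There is no real obstacle here: the argument is essentially the observation that multiplication by a positive rational is a monoid isomorphism, and every factorization-theoretic invariant defined purely in terms of the monoid structure is transported by such an isomorphism.
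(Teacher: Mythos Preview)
Your argument is correct and follows exactly the same idea as the paper's proof, which simply observes that multiplication by $q$ is an isomorphism $N \to qN$; you have merely unpacked the details (finite generation of submonoids of $\nn_0$, the bijection on atoms, and the induced length-preserving map on factorization monoids) that the paper leaves implicit.
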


\begin{proof}
	It suffices to notice that, for every $q \in \qq_{> 0}$, multiplication by $q$ yields an isomorphism from $N$ to $qN$.
\end{proof}

\begin{theorem} \label{thm:PM with full system of sets of lengths}
	There is an atomic Puiseux monoid with full system of sets of lengths.
\end{theorem}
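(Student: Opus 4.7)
The plan is to build $M$ by gluing together countably many scaled numerical monoids, one for each nonempty finite subset of $\zz_{\ge 2}$. Enumerate these subsets as $S_1, S_2, \ldots$, and for each $n$ invoke Theorem~\ref{thm:simplified version of set of length realizability for NS} to obtain a numerical monoid $N_n$ and an element $x_n \in N_n$ with $\mathsf{L}_{N_n}(x_n) = S_n$. Pick pairwise distinct primes $p_n$ strictly larger than both $x_n$ and every atom of $N_n$, together with positive integers $r_n$ satisfying $\gcd(r_n, p_n) = 1$ and $r_n \to \infty$; set $q_n := r_n/p_n$ and define
\[
	M := \langle q_n a \mid n \in \nn, \ a \in \mathcal{A}(N_n)\rangle \subseteq \qq_{\ge 0}.
\]
Lemma~\ref{lem:scaling numerical semigroups preserves lengths} gives $q_n N_n \subseteq M$ and $\mathsf{L}_{q_n N_n}(q_n x_n) = S_n$, so inside its own copy $q_n x_n$ already has the correct set of lengths.

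The remaining verifications are driven by the valuations $\mathsf{v}_{p_n}$. For any factorization $\sum_i c_i \alpha_i$ in $M$ with $\alpha_i = r_{n_i} b_i/p_{n_i}$, set $\tilde{C}_m := \sum_{n_i = m} c_i b_i \in \nn_0$ so the factored element equals $\sum_m r_m \tilde{C}_m/p_m$. For any prime $p_m$ not dividing the denominator of this element, the ultrametric property of $\mathsf{v}_{p_m}$ combined with $\gcd(r_m, p_m) = 1$ forces $p_m \mid \tilde{C}_m$ whenever $\tilde{C}_m > 0$, so the contribution $r_m \tilde{C}_m/p_m$ is a nonnegative integer $r_m K_m$. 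When the factored element is $q_n a$ or $q_n x_n$, only $p_n$ occurs in the denominator, and the inequalities $a < p_n$, $x_n < p_n$ together with $\gcd(r_n, p_n) = 1$ force $\sum_{m \neq n} r_m K_m = 0$ and $\tilde{C}_n = a$ (respectively $x_n$). The factorization therefore collapses inside $q_n N_n$, establishing that each $q_n a$ is an atom of $M$ and that $\mathsf{L}_M(q_n x_n) = S_n$.

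The main obstacle is the BF verification for a general $y \in M$. Let $P_y$ denote the finite set of primes dividing $\mathsf{d}(y)$. Every ``outside'' index $m$ (with $p_m \notin P_y$ and $\tilde{C}_m > 0$) contributes an integer $r_m K_m$ with $K_m \ge 1$, and the total $\sum_{m \notin P_y} r_m K_m \le y$; since $r_m \to \infty$, only finitely many $m$ satisfy $r_m \le y$. For each $m \in P_y$ the Chinese Remainder Theorem pins $\tilde{C}_m$ to one residue class in a bounded range, and each such $\tilde{C}_m$ admits only finitely many factorizations in the BF monoid $N_m$. Combining these counts, $\mathsf{Z}(y)$ is finite and $M$ is a BF-monoid. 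Without the stretching factors $r_n$, small integers would lie in $M$ and admit factorizations distributed across infinitely many distinct $q_m N_m$, producing unbounded length sets; the device $r_n \to \infty$ makes each outside contribution cost at least $r_m$ in value and is what rescues the BF property. Combining this with $\mathsf{L}_M(0) = \{0\}$, $\mathsf{L}_M(\alpha) = \{1\}$ for every atom $\alpha$, and $\mathsf{L}_M(q_n x_n) = S_n$ for every $n$ yields $\mathcal{L}(M) = \mathcal{S}$.
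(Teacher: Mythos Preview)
Your construction follows the same gluing strategy as the paper: enumerate the finite subsets $S_n \subseteq \zz_{\ge 2}$, realize each inside a numerical monoid via Theorem~\ref{thm:simplified version of set of length realizability for NS}, rescale by a rational whose denominator is a fresh large prime $p_n$, and use $p_n$-adic valuations to isolate the $n$-th block so that atomicity and the targeted sets of lengths survive in the union. The verifications that each $q_n a$ is an atom of $M$ and that $\mathsf{L}_M(q_n x_n)=S_n$ are essentially the paper's arguments.

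The one genuine difference is how the BF property is secured. The paper takes numerator $p_n-1$ and additionally enforces $\max A_n<\min A_{n+1}$, so the atoms of $M$ are bounded away from $0$ and Theorem~\ref{theo:sufficient condition for atomicity} applies immediately. You instead let the numerators $r_n\to\infty$ and count factorizations directly: any block $m$ with $p_m\nmid \mathsf d(y)$ contributes an integer at least $r_m$ to the value, so only the finitely many $m$ with $r_m\le y$ can occur, and for every relevant $m$ the crude bound $\tilde C_m\le p_m y/r_m$ combined with the fact that each $N_m$ is an FF-monoid finishes the finiteness of $\mathsf Z(y)$. This is correct (the Chinese Remainder remark is inessential; the size bound on $\tilde C_m$ already suffices). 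Your route avoids invoking Theorem~\ref{theo:sufficient condition for atomicity} at the price of a short direct count; the paper's route is quicker but leans on that theorem. The underlying idea is the same.
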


\begin{proof}
	Because the collection of all finite subsets of $\zz_{\ge 2}$ is countable, we can list them in a sequence, say $\{S_n\}$. Now we recursively construct a sequence of finitely generated Puiseux monoids $\{M_n\}$, a sequence of rational numbers $\{x_n\}$, and a sequence of odd prime numbers $\{p_n\}$ satisfying the following conditions:
	\begin{enumerate}
		\item $x_n \in M_n$ and $\mathsf{L}_{M_n}(x_n) = S_n$; \vspace{2pt}
		\item the set $A_n$ minimally generating $M_n$ satisfies that $\mathsf{d}(a) = p_n$ for every $a \in A_n$; \vspace{2pt}
		\item $p_n > \max \big\{2x_n, 2a \mid a \in A_n \big\}$; \vspace{2pt}
		\item $\max A_n < \min A_{n+1}$ for every $n \in \nn$.
	\end{enumerate}
	To do this, use Theorem~\ref{thm:simplified version of set of length realizability for NS} to find a numerical monoid $M'_1$ minimally generated by $A'_1$ such that $\mathsf{L}(x'_1) = S_1$ for some $x'_1 \in M'_1$. Then take $p_1$ to be a prime number satisfying that $p_1 > \max\{2x'_1, 2a' \mid a' \in A'_1\}$. Now define
	\[
		M_1 = \frac{p_1 - 1}{p_1}M'_1 \quad \text{and} \quad x_1 = \frac{p_1 - 1}{p_1}x'_1.
	\]
	By Lemma~\ref{lem:scaling numerical semigroups preserves lengths}, the element $x_1$ satisfies condition (1). Conditions (2), (3), and (4) follow immediately. Suppose now that we have already constructed a set of Puiseux monoids $\{M_1, \dots, M_n\}$, a set of rational numbers $\{x_1, \dots, x_n\}$, and a set of prime numbers $\{p_1, \dots, p_n\}$ such that the above conditions are satisfied. By Theorem~\ref{thm:simplified version of set of length realizability for NS}, there exists a submonoid $M'_{n+1}$ of $(\nn_0,+)$ minimally generated by $A'_{n+1}$ which contains an element $x'_{n+1}$ with $\mathsf{L}_{M'_{n+1}}(x'_{n+1}) = S_{n+1}$. By Lemma~\ref{lem:scaling numerical semigroups preserves lengths}, we can assume that the elements of $A'_{n+1}$ are large enough that $\max A_n < \min A'_{n+1}$. Now choose a prime number $p_{n+1}$ sufficiently large such that $p_{n+1} \nmid a$ for any $a' \in A'_{n+1}$,
	\begin{align} \label{eq:condition (3) for p_{n+1}}
		p_{n+1} > \max \bigg\{2 \, \frac{p_{n+1} - 1}{p_{n+1}} x'_{n+1}, \, 2 \, \frac{p_{n+1} - 1}{p_{n+1}} \, a' \ \bigg{|} \ a' \in A'_{n+1} \bigg\},
	\end{align}
	and
	\begin{align} \label{eq:condition (4) for p_{n+1}}
		\max A_n < \frac{p_{n+1} - 1}{p_{n+1}} \min A'_{n+1}.
	\end{align}
	Finally, set
	\[
		M_{n+1} = \frac{p_{n+1} - 1}{p_{n+1}} M'_{n+1} \quad \text{and} \quad x_{n+1} = \frac{p_{n+1} - 1}{p_{n+1}}x'_{n+1}.
	\]
	By Lemma~\ref{lem:scaling numerical semigroups preserves lengths}, it follows that $\mathsf{L}_{M_{n+1}}(x_{n+1}) = \mathsf{L}_{M'_{n+1}}(x'_{n+1}) = S_{n+1}$, which is condition (1). The fact that $p_{n+1} \nmid a$ for any $a \in A'_{n+1}$ yields condition (2). Finally, conditions (3) and (4) follows from (\ref{eq:condition (3) for p_{n+1}}) and (\ref{eq:condition (4) for p_{n+1}}), respectively. Hence our sequence of finitely generated Puiseux monoids $\{M_n\}$ satisfies the desired conditions.
	
	Now consider the Puiseux monoid $M = \langle A \rangle$, where $A := \cup_{n \in \nn} A_n$. In addition, let $\{k_n\}$ be a sequence of positive integers such that
	\[
		A_n =: \bigg\{\frac{p_n - 1}{p_n} a_{ni} \ \bigg{|} \ 1 \le i \le k_n \bigg\},
	\]
	where $a_{ni} \in \nn$ for every $n$ and $i = 1, \dots, k_n$. We verify now that $M$ is atomic and $\mathcal{A}(M) = A$. To do so, suppose that for some $m \in \nn$ and $j \in \{1,\dots, k_m\}$ we can write
	\begin{align} \label{eq:equation for atomicity}
		\frac{p_m - 1}{p_m}a_{mj} = \sum_{n=1}^s \sum_{i=1}^{k_n} c_{ni} \frac{p_n - 1}{p_n} a_{ni},
	\end{align}
	where $s \in \nn$ and $c_{ni} \in \nn_0$ for every $n=1,\dots,s$ and $i=1,\dots, k_n$. If $m > s$, then the $p_m$-adic valuation of the right-hand side of (\ref{eq:equation for atomicity}) would be nonnegative, contradicting that $p_m \nmid (p_m - 1) a_{mj}$. Therefore assume that $m \le s$. Now set $q_s = p_1 \dots p_s$. After multiplying \eqref{eq:equation for atomicity} by $q_s$ and taking modulo $p_m$, we find that
	\begin{align} \label{eq:equation for atomicity 1}
		p_m \ \bigg{|} \ \frac {p_m - 1}{p_m} q_s \, a_{mj} - \frac {p_m - 1}{p_m} q_s \sum_{i=1}^{k_m} c_{mi} a_{mi}.
	\end{align}
	Since $\gcd \big(p_m, \frac{p_m - 1}{p_m} q_s\big) = 1$, there exists $N \in \nn_0$ such that
	\[
		a_{mj}  = Np_m + \sum_{i=1}^{k_m} c_{mi} a_{mi}.
	\]
	The fact that $p_m > \max 2A_m \ge a_m$ (by condition (3)) now implies that $N=0$ and, therefore, one obtains that $a_{mj} = c_{m1} a_{m1} + \dots + c_{mk_m} a_{mk_m}$. As $A_m$ generates $M_m$ minimally, it follows that $c_{mj} = 1$ and $c_{mi} = 0$ for each $i \neq j$. This, along with (\ref{eq:equation for atomicity}), implies that $c_{ni} = 0$ for every $(n,i) \neq (m,j)$. As a result, $\mathcal{A}(M) = A$.
	
	Now we show that every nonempty finite subset of $\zz_{\ge 2}$ is a set of lengths of $M$. This amounts to verifying that $\mathsf{L}_M(x_\ell) = \mathsf{L}_{M_\ell}(x_\ell)$ for every $\ell \in \nn$. Recall that
	\begin{align*} 
		x_\ell = \frac{p_\ell - 1}{p_\ell} x'_\ell, \ \text{ where } \ \mathsf{L}_{M'_\ell}(x'_\ell) = S_\ell.
	\end{align*}
	Suppose that for some $\ell, t \in \nn$ with $\ell \le t$ we have
	\begin{align} \label{eq:length}
		\frac{p_\ell - 1}{p_\ell} x'_\ell = \sum_{n=1}^t \sum_{i=1}^{k_n} c_{ni} \frac{p_n - 1}{p_n} a_{ni} = \sum_{n \in [t] \setminus \{\ell\}} \frac{p_n - 1}{p_n} \sum_{i=1}^{k_n} c_{ni} a_{ni} + \frac {p_\ell - 1}{p_\ell} \sum_{i=1}^{k_{\ell}} c_{\ell i} a_{\ell i},
	\end{align}
	where $t \in \nn$ and $c_{ni} \in \nn_0$ for every $n=1,\dots,t$ and $i = 1, \dots, k_n$. Multiplying the equality (\ref{eq:length}) by $q_t = p_1 \dots p_t$ and taking modulo $p_\ell$, one can see that
	\begin{align}
		p_\ell \ \bigg{|} \ \frac{p_\ell - 1}{p_\ell} q_t \, x'_\ell - \frac{p_\ell - 1}{p_\ell} q_t \, \sum_{i=1}^{k_\ell} c_{\ell i} a_{\ell i}.
	\end{align}
	Once again, the fact that $\gcd(p_\ell, \frac{p_\ell - 1}{p_\ell})$ implies the existence of $N' \in \nn_0$ such that
	\[
		x'_\ell = N'p_\ell + \sum_{i=1}^{k_\ell} c_{\ell i} a_{\ell i}.
	\]
	However, as $p_\ell > 2x_\ell \ge x'_\ell$ (by condition (3)), we have that $N' = 0$ and, as a consequence, $c_{\ell 1} + \dots + c_{\ell k_\ell} \in \mathsf{L}_{M'_\ell}(x'_\ell)$. The fact that $x'_\ell = c_{\ell 1} a_{\ell 1} + \dots + c_{\ell k_\ell} a_{\ell k_\ell}$, along with equality (\ref{eq:length}), immediately implies that $c_{ni} = 0$ for every $n \neq \ell$ and $i = 1,\dots,k_n$. As a result,
	\[
		\sum_{n=1}^t \sum_{i=1}^{k_n} c_{ni} = \sum_{i=1}^{k_{\ell}} c_{\ell i} \in \mathsf{L}_{M'_\ell}(x'_\ell) = \mathsf{L}_{M_\ell}(x_\ell).
	\]
	Then the inclusion $\mathsf{L}_{M}(x_\ell) \subseteq \mathsf{L}_{M_\ell}(x_\ell)$ holds. As the reverse inclusion clearly holds, we conclude that $\mathsf{L}_M(x_\ell) = \mathsf{L}_{M_\ell}(x_\ell)$ for every $\ell \in \nn$. Thus, each subset of $\zz_{\ge 2}$ can be realized as the set of lengths of some element in $M$. Because $\{0\}$ and $\{1\}$ can be obviously realized, we get that
	\begin{align} \label{eq:inclusion of systems of sets of lengths}
		\mathcal{L}(M) \supseteq \big\{\{0\}, \{1\}, S \subset \zz_{\ge 2} \ | \ |S| < \infty \big\}.
	\end{align}
	Finally, it is easily seen that condition (4) ensures that $M$ does not contain $0$ as a limit point. So it follows by Theorem~\ref{theo:sufficient condition for atomicity} that $M$ is a BF-monoid. Hence every set of lengths of $M$ is finite, which yields the reverse inclusion of (\ref{eq:inclusion of systems of sets of lengths}).
\end{proof}

\begin{remark}
	Theorem~\ref{thm:PM with full system of sets of lengths} does not follow from Example~\ref{ex:Krull monoids with cyclic class groups and a prime in each class have FSSL} via transfer homomorphisms; indeed, it was proved in \cite{fG17b} that the only nontrivial Puiseux monoids that are transfer Krull are those isomorphic to $(\nn_0,+)$. We refer the reader to \cite[Section~4]{aG16} for the definition and applications of transfer homomorphisms. \\
\end{remark}

\section{A Few Words on the Characterization Problem} \label{sec:characterization problem}

The question of whether the arithmetical information encoded in the phenomenon of nun-unique factorization of the ring of integers $\mathcal{O}_K$ of a given algebraic number field $K$ suffices to characterize the class group $\mathcal{C}(\mathcal{O}_K)$ dated back to the mid-nineteenth century. In the 1970's, Narkiewicz proposed the more general question of whether the arithmetic describing the non-uniqueness of factorizations in a Krull domain could be used to characterize its class group. For affirmative answer to this, the reader might want to consult \cite[Sections~7.1 and 7.2]{GH06b}. The next conjecture, also known as the Characterization Problem, is still open. However, for an overview of results where the statement of the conjecture holds under certain extra conditions, we refer the reader to \cite[Theorem~23]{aG16}.

\begin{conj}
	Let $M$ and $M'$ be Krull monoids with respective finite abelian class groups $G$ and $G'$ each of their classes contains at least one prime divisor. Assume also that $\mathsf{D}(G) \ge 4$. If $\mathcal{L}(M) = \mathcal{L}(M')$, then $M \cong M'$.
\end{conj}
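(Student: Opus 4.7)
The plan is to reduce the conjecture to the purely combinatorial setting of block monoids of finite abelian groups, and then to attempt to recover the group from arithmetic invariants of its system of sets of lengths. Since every class of $\mathcal{C}(M) \cong G$ contains at least one prime divisor, there is a classical transfer homomorphism $\theta \colon M \to \mathcal{B}(G)$ to the monoid of zero-sum sequences over $G$. Transfer homomorphisms preserve sets of lengths, so $\mathcal{L}(M) = \mathcal{L}(\mathcal{B}(G))$ and likewise $\mathcal{L}(M') = \mathcal{L}(\mathcal{B}(G'))$. The conjecture therefore reduces to the combinatorial statement that, among finite abelian groups with Davenport constant at least~$4$, the system $\mathcal{L}(\mathcal{B}(G))$ determines $G$ up to isomorphism; in particular no further information about $M$ and $M'$ is needed beyond their class groups.

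Next I would try to extract arithmetic invariants of $G$ directly from $\mathcal{L}(\mathcal{B}(G))$. The Davenport constant $\mathsf{D}(G)$ can be read off as $2\rho(\mathcal{B}(G))$, where the elasticity $\rho$ equals the supremum of $\max L / \min L$ over $L \in \mathcal{L}(\mathcal{B}(G))$ and is thus visible from $\mathcal{L}(\mathcal{B}(G))$ alone. One would next recover the set of distances $\Delta(G) = \bigcup_{L \in \mathcal{L}(\mathcal{B}(G))} \Delta(L)$, the catenary degree, and refined invariants such as the set of minimal distances and the unions of sets of lengths of bounded length. By results of Geroldinger, Grynkiewicz, Schmid and their collaborators, for many classes of groups one can already recover the exponent $\exp(G)$ from $\max \Delta(G)$, and obtain bounds on the rank of $G$ via the structure theorem describing long sets of lengths as almost-arithmetical progressions.

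With these invariants in hand, the endgame would be to partition finite abelian groups with $\mathsf{D}(G) \ge 4$ into structural classes (cyclic, elementary $p$-groups, rank-two groups, and the general case), invoke within each class the known partial characterization results summarized in \cite[Theorem~23]{aG16}, and then verify that the invariants extracted from $\mathcal{L}(\mathcal{B}(G))$ separate groups living in different classes.

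The main obstacle is precisely this last step. At present no finite list of invariants readable from $\mathcal{L}(\mathcal{B}(G))$ is known to distinguish all finite abelian groups with $\mathsf{D}(G) \ge 4$ up to isomorphism; the most delicate situation is when $G$ and $G'$ share Davenport constant, exponent, rank, and set of distances but are non-isomorphic. Ruling this out appears to require substantially sharper structural information about long zero-sum-free sequences in $G$ than is currently available, and this is exactly why the excerpt presents the statement as an open conjecture rather than a theorem. A self-contained proof within the scope of the present paper is therefore not to be expected, and the realistic outcome of the plan above is a further narrowing of the conjecture to a small list of residual cases.
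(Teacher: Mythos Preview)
The statement is presented in the paper as an open \emph{conjecture} (the Characterization Problem), not as a theorem; the paper offers no proof and explicitly refers the reader to partial results in the literature. Your proposal correctly recognizes this and, rather than claiming a proof, outlines the standard reduction to block monoids and the strategy of extracting invariants from $\mathcal{L}(\mathcal{B}(G))$, together with an honest account of why the argument cannot currently be completed. There is thus no paper proof to compare against, and your assessment of the situation is accurate.

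One substantive remark on your reduction step. You argue that the conjecture reduces to showing that $\mathcal{L}(\mathcal{B}(G))$ determines $G$ up to isomorphism. That conclusion yields only $G \cong G'$, whereas the conjecture as printed asks for $M \cong M'$. Taken literally, the printed conclusion is too strong: two non-isomorphic Krull monoids can share the same finite class group with a prime in every class (simply vary the number of prime divisors in each class), yet they will have identical systems of sets of lengths via the transfer homomorphism you invoke. The standard formulation of the Characterization Problem therefore asks only for $G \cong G'$, and your reduction is correct for that version; the discrepancy is a misstatement in the paper rather than a gap in your argument.
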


Because the system of sets of lengths encodes significant information about the arithmetic of factorizations of an atomic monoid, further questions in the same spirit of the above conjecture naturally arise. For instance, we might wonder whether, in a specified family $\mathcal{F}$ of atomic monoids, a member is determined up to isomorphisms by its system of sets of lengths. It was proved in \cite{ACHP07} that the answer is negative when $\mathcal{F}$ is the family of all numerical monoids. In this section, we use Theorem~\ref{thm:PM with full system of sets of lengths} to answer the same question when $\mathcal{F}$ is taken to be the family of all non-finitely generated atomic Puiseux monoids.

\begin{lemma}\cite[Proposition~3.1]{fG17b} \label{lem:isomorphism between PM}
	The homomorphisms of Puiseux monoids are precisely those given by rational multiplication.
\end{lemma}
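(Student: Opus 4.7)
The plan is to verify the two directions separately. The reverse direction is trivial: if $q \in \qq_{\ge 0}$ and $qM \subseteq M'$, then the map $x \mapsto qx$ is additive and sends $0$ to $0$, so it is a monoid homomorphism $M \to M'$. All the content lies in showing that every monoid homomorphism $\varphi \colon M \to M'$ between Puiseux monoids arises in this way.

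For the forward direction, I would first dispose of the trivial case $M = \{0\}$, where $\varphi$ must be the zero map (i.e., multiplication by $0$). Otherwise, I would fix any element $m_0 \in M^\bullet$ and define the candidate scalar
\[
  q := \frac{\varphi(m_0)}{m_0} \in \qq_{\ge 0}.
\]
The goal then reduces to proving $\varphi(m) = qm$ for every $m \in M$.

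The main step exploits the commensurability of positive rationals: given any $m \in M^\bullet$, write $m_0/m = a/b$ with $a, b \in \nn$, so that $bm_0 = am$ as an identity in $M$. Since any monoid homomorphism automatically satisfies $\varphi(km) = k\varphi(m)$ for all $k \in \nn_0$ (by a straightforward induction using additivity), applying $\varphi$ yields $b\,\varphi(m_0) = a\,\varphi(m)$ inside $M' \subseteq \qq_{\ge 0}$. Dividing in $\qq$ then gives
\[
  \varphi(m) = \frac{b}{a}\,\varphi(m_0) = \frac{b}{a}\, q\, m_0 = q m,
\]
as desired. The case $m = 0$ is immediate from $\varphi(0) = 0$.

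I do not anticipate a real obstacle: the argument uses only that $M, M' \subseteq \qq_{\ge 0}$ and the most basic properties of monoid homomorphisms. The one subtle point worth emphasizing is that the final rational division takes place in $\qq$ rather than inside the monoids, so no hidden cancellation or divisibility condition in $M$ or $M'$ is being invoked; this is precisely what makes the statement so clean in the Puiseux setting.
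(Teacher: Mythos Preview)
Your argument is correct: the commensurability trick (clearing denominators via $bm_0 = am$ and then applying $\varphi$) is exactly what makes the Puiseux case so clean, and your remark that the final division happens in $\qq$ rather than in the monoid is the right point to flag.

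There is nothing to compare against here: the paper does not supply its own proof of this lemma but simply imports it as \cite[Proposition~3.1]{fG17b}. Your write-up is a self-contained justification of the cited result, and the approach you give is the standard one.
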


\begin{lemma} \label{lem:non-isomorphic PM}
	Let $P$ and $Q$ be disjoint infinite sets of primes, and let $M_P = \langle a_p \mid p \in P \rangle$ and $M_Q = \langle b_q \mid q \in Q \rangle$ be Puiseux monoids such that for all $p \in P$ and $q \in Q$, the denominators $\mathsf{d}(a_p)$ and $\mathsf{d}(b_q)$ are nontrivial powers of $p$ and $q$, respectively. Then $M_P \ncong M_Q$.
\end{lemma}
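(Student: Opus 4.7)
The plan is to argue by contradiction using Lemma~\ref{lem:isomorphism between PM} together with the $p$-adic valuation. Suppose there is an isomorphism $\varphi \colon M_P \to M_Q$. By Lemma~\ref{lem:isomorphism between PM}, $\varphi$ is given by multiplication by some $r \in \qq_{>0}$, so $rM_P \subseteq M_Q$. In particular, for each $p \in P$, the element $ra_p$ lies in $M_Q$ and can therefore be written as $ra_p = \sum_{i=1}^{k} c_i b_{q_i}$ for some $c_i \in \nn_0$ and $q_i \in Q$.

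The key observation is that, for a fixed $p \in P$, all atoms of $M_Q$ have nonnegative $p$-adic valuation. Indeed, for every $q \in Q$ the denominator $\mathsf{d}(b_q)$ is a power of $q$, and since $q \ne p$ (because $P \cap Q = \varnothing$) we get $\pval(\mathsf{d}(b_q)) = 0$, hence $\pval(b_q) = \pval(\mathsf{n}(b_q)) \ge 0$. Using property $(2.2)$ of $\pval$ repeatedly, I conclude $\pval(ra_p) \ge 0$. On the other hand, because $\mathsf{d}(a_p)$ is a \emph{nontrivial} power of $p$ and $\gcd(\mathsf{n}(a_p), \mathsf{d}(a_p)) = 1$, we have $\pval(a_p) \le -1$. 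Combining this with property $(2.1)$ gives
\[
    0 \le \pval(ra_p) = \pval(r) + \pval(a_p),
\]
so $\pval(r) \ge -\pval(a_p) \ge 1$, and in particular $p$ divides $\mathsf{n}(r)$.

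Since this argument applies to every $p \in P$, the positive integer $\mathsf{n}(r)$ would be divisible by every prime in the infinite set $P$, which is impossible. Hence no isomorphism exists, and $M_P \ncong M_Q$.

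The bookkeeping with $\pval$ is the only delicate point, and it is essentially routine once one notices that the disjointness $P \cap Q = \varnothing$ forces $\pval(b_q) \ge 0$ for every generator of $M_Q$ whenever $p \in P$. I do not anticipate any real obstacle beyond setting up the valuation computation carefully.
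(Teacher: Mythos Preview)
Your proof is correct and follows essentially the same approach as the paper: reduce via Lemma~\ref{lem:isomorphism between PM} to multiplication by some $r\in\qq_{>0}$, and then exploit that the prime divisors of denominators in $M_P$ (resp.\ $M_Q$) lie only in $P$ (resp.\ $Q$). The paper argues in the mirror direction---it picks a single $q\in Q$ with $q\nmid\mathsf{n}(r)$ and observes that $rb_q\in M_P$ still has $q$ in its denominator, forcing $q\in P$---whereas you run the valuation inequality over all $p\in P$ to conclude $\mathsf{n}(r)$ has infinitely many prime factors; this is the same idea phrased two ways.
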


\begin{proof}
	Suppose, by way of contradiction, that $M_P \cong M_Q$. By Lemma~\ref{lem:isomorphism between PM}, there exists $r \in \qq_{> 0}$ such that $M_P = r M_Q$. If $q$ is a prime number in $Q$ such that $q \nmid \mathsf{n}(r)$, then $r b_q$ would be an element of $M_P$ such that $\mathsf{d}(r b_q)$ is divisible by a nontrivial power of $q$ and, therefore, $q \in P$. But this contradicts the fact that $P \cap Q$ is empty.
\end{proof}

A Puiseux monoid $M$ is \emph{bounded} if it can be generated by a bounded set of rational numbers; otherwise, $M$ is said to be \emph{unbounded}.

\begin{lemma} \cite[Lemma~3.4]{GG17} \label{lem:finite denominator set iff finitely-generated}
	Let $M$ be a nontrivial Puiseux monoid. Then $\mathsf{d}(M^\bullet)$ is bounded if and only if $M$ is finitely generated.
\end{lemma}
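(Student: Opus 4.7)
The plan is to prove both implications by reducing the statement to the well-known fact that every submonoid of $(\nn_0, +)$ is finitely generated. This reduction is made possible because a bound on denominators lets us scale $M$ into $\nn_0$.

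For the easy direction, I would assume $M = \langle q_1, \ldots, q_n \rangle$ with $q_i \in \qq_{\ge 0}$ and set $D := \text{lcm}(\mathsf{d}(q_1), \ldots, \mathsf{d}(q_n))$. Writing each $q_i = n_i/D$ for some $n_i \in \nn_0$, any element $x \in M$ has the form $x = \sum_{i=1}^{n} c_i q_i = \big(\sum c_i n_i\big)/D$ with $c_i \in \nn_0$; hence $\mathsf{d}(x) \mid D$ and in particular $\mathsf{d}(x) \le D$. Thus $\mathsf{d}(M^\bullet)$ is bounded above by $D$.

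For the converse, suppose $\mathsf{d}(M^\bullet)$ is bounded. Since it is a bounded subset of $\nn$, it is finite, so we may set $D$ to be the least common multiple of all its elements. Then every $x \in M$ satisfies $Dx \in \nn_0$, so the scaled set $DM := \{Dx \mid x \in M\}$ is a submonoid of $(\nn_0,+)$. Now I would invoke the standard fact that every submonoid of $(\nn_0,+)$ is finitely generated: if $S \subseteq \nn_0$ is a submonoid and $d = \gcd(S^\bullet)$, then $S/d$ is a numerical monoid, hence finitely generated, and so $S$ is finitely generated. Applying this to $S = DM$ and then using Lemma~\ref{lem:isomorphism between PM} (multiplication by $1/D$ is an isomorphism of Puiseux monoids from $DM$ onto $M$), we conclude that $M$ is finitely generated as well.

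There is no real obstacle here; the statement is essentially a bookkeeping exercise once one has identified the right scaling trick. The only delicate point worth mentioning explicitly is the justification that $DM$ is finitely generated, which the proof should either recall briefly or cite from the numerical monoid literature (e.g., \cite{GR09}), to keep the argument self-contained.
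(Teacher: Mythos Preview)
Your argument is correct: the scaling trick reduces both directions to the classical fact that every additive submonoid of $\nn_0$ is finitely generated, and your use of Lemma~\ref{lem:isomorphism between PM} to transport finite generation back from $DM$ to $M$ is appropriate.

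Note, however, that the paper does not give its own proof of this lemma; it is quoted verbatim from \cite[Lemma~3.4]{GG17} and used as a black box. So there is no in-paper argument to compare yours against. For what it is worth, the proof in \cite{GG17} follows essentially the same line you outline: clear denominators to embed into $\nn_0$ and appeal to the finite generation of submonoids of $(\nn_0,+)$.
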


\begin{theorem}
	There exist two non-isomorphic non-finitely generated atomic Puiseux monoids with the same system of sets of lengths.
\end{theorem}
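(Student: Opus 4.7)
The plan is to run the construction of Theorem~\ref{thm:PM with full system of sets of lengths} twice, using primes from two disjoint infinite sets of primes. Split the set of odd primes into two disjoint infinite sets $P$ and $Q$. Inspecting the proof of Theorem~\ref{thm:PM with full system of sets of lengths}, the only requirements on the auxiliary sequence $\{p_n\}$ are that each $p_n$ be a prime satisfying the size bound (\ref{eq:condition (3) for p_{n+1}}), the gap condition (\ref{eq:condition (4) for p_{n+1}}), and $p_{n+1} \nmid a'$ for each $a' \in A'_{n+1}$. Since both $P$ and $Q$ are infinite, at each step of the recursion we can pick $p_n$ from the prescribed set (either $P$ or $Q$) large enough to meet all three conditions. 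Hence the construction yields two BF-Puiseux monoids $M_P$ and $M_Q$, both with full system of sets of lengths, whose atoms have denominators drawn exclusively from $P$ and $Q$, respectively.

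Since both monoids have full system of sets of lengths,
\[
    \mathcal{L}(M_P) = \mathcal{S} = \mathcal{L}(M_Q),
\]
so they are indistinguishable by their systems of sets of lengths. It remains to verify the two structural conditions. First, by design, each atom of $M_P$ has the form $\frac{p_n-1}{p_n}\,a'_{ni}$ with $p_n \in P$ and $p_n \nmid (p_n-1)a'_{ni}$; hence its denominator equals $p_n$, a (first, hence nontrivial) power of a prime in $P$. The analogous statement holds for $M_Q$ with primes in $Q$. Since $P \cap Q = \emptyset$, Lemma~\ref{lem:non-isomorphic PM} immediately yields $M_P \not\cong M_Q$.

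Second, to see that $M_P$ (and symmetrically $M_Q$) is non-finitely generated, note that the sequence $\{p_n\}$ is strictly increasing and $p_n$ divides the denominator of some atom of $M_P$, so the denominator set $\mathsf{d}(M_P^\bullet)$ is unbounded. Lemma~\ref{lem:finite denominator set iff finitely-generated} then guarantees that $M_P$ is not finitely generated. Atomicity of both monoids was already established in Theorem~\ref{thm:PM with full system of sets of lengths}.

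I do not anticipate a serious obstacle: the main point is simply to observe that the freedom to choose each $p_n$ from any prescribed infinite set of primes is already present in the proof of Theorem~\ref{thm:PM with full system of sets of lengths}, and that Lemma~\ref{lem:non-isomorphic PM} was tailor-made to detect non-isomorphism through disjoint prime supports of denominators. The only minor bookkeeping item is verifying that the denominators produced by the construction are genuine (nontrivial) powers of the relevant $p_n$, which follows from the coprimality condition $p_n \nmid a'_{ni}$ imposed during the recursion.
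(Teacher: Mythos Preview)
Your proposal is correct and follows essentially the same approach as the paper: run the construction of Theorem~\ref{thm:PM with full system of sets of lengths} twice over disjoint infinite prime sets $P$ and $Q$, conclude that both resulting monoids have full system of sets of lengths, invoke Lemma~\ref{lem:non-isomorphic PM} for non-isomorphism, and use Lemma~\ref{lem:finite denominator set iff finitely-generated} for non-finite generation. Your write-up is in fact slightly more explicit than the paper's in checking that the denominators are genuine nontrivial powers of primes in the chosen set and that the recursion can always select $p_n$ from the prescribed infinite set.
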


\begin{proof}
	Consider two infinite sets $P$ and $Q$ consisting of prime numbers such that $P \cap Q$ is empty. Now let us construct, as in the proof of Theorem~\ref{thm:PM with full system of sets of lengths}, a Puiseux monoids $M_P$ using only prime numbers in $P$ such that $M_P$ has full system of sets of lengths. The way we constructed the Puiseux monoid $M_P$ ensures that $\mathsf{d}(M_P^\bullet)$ is unbounded. So Lemma~\ref{lem:finite denominator set iff finitely-generated} implies that $M_P$ is non-finitely generated. Similarly, we can construct a non-finitely generated Puiseux monoid $M_Q$ with full system of sets of lengths by using only prime numbers in $Q$. As $P$ and $Q$ are disjoint, Lemma~\ref{lem:non-isomorphic PM} guarantees that $M_P$ and $M_Q$ are non-isomorphic. The fact that $M_P$ and $M_Q$ both have full systems of sets of lengths completes the proof. \\
\end{proof}

\section{Intersections of Systems of Sets of Lengths} \label{sec:intersections of sets of lengths}

In their recent paper \cite{GS17}, Geroldinger and Schmid studied the intersections of systems of sets of lengths of numerical monoids. In particular, they proved the following result.

\begin{theorem} \cite[Theorem~4.1]{GS17} \label{thm:intersecion of SSL for NM}
	We have
	\[
		\bigcap \mathcal{L}(H) = \big\{ \{0\}, \{1\}, \{2\} \big\},
	\]
	where the intersection is taken over all numerical monoids $H \subset \nn_0$. More precisely, for every $s \in \zz_{\ge 6}$, we have
	\[
		\bigcap_{|\mathcal{A}(H)|=s} \mathcal{L}(H) = \big\{ \{0\}, \{1\}, \{2\} \big\},
	\]
	and, for every $s \in \{2, 3, 4, 5\}$, we have
	\[
		\bigcap_{|\mathcal{A}(H)|=s} \mathcal{L}(H) = \big\{ \{0\}, \{1\}, \{2\}, \{3\} \big\},
	\]
	where the intersections are taken over all numerical monoids $H$ with the given properties.
\end{theorem}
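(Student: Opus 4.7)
The plan is to prove each of the three equalities by establishing both inclusions separately. For the inclusion $\supseteq$, the sets $\{0\}$, $\{1\}$, and $\{2\}$ belong to every $\mathcal{L}(H)$: we have $\{0\} = \mathsf{L}(0)$, $\{1\} = \mathsf{L}(a)$ for any atom $a$, and $\{2\} = \mathsf{L}(2 a_{\min})$ with $a_{\min} := \min \mathcal{A}(H)$, since $3 a_{\min} > 2 a_{\min}$ precludes length-$\ge 3$ factorizations of $2 a_{\min}$, while a length-$2$ factorization forces both atoms to equal $a_{\min}$. For the restricted statement with $s \in \{2,3,4,5\}$, we must additionally prove $\{3\} \in \mathcal{L}(H)$ whenever $|\mathcal{A}(H)| \le 5$. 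The natural candidate is $3 a_{\min}$: length $\ge 4$ is impossible because $4 a_{\min} > 3 a_{\min}$, and a length-$2$ factorization requires two atoms in $(a_{\min}, 2 a_{\min})$ summing to $3 a_{\min}$; a case analysis on the (at most five) atoms should either show $\mathsf{L}(3 a_{\min}) = \{3\}$ directly or identify a different sum of three atoms with a forced singleton set of lengths.

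For the inclusion $\subseteq$, given any finite $S \subseteq \zz_{\ge 2}$ not listed on the RHS (and $\ne \{3\}$ in the restricted small-$s$ case), we must produce a numerical monoid $H$ (with $|\mathcal{A}(H)| = s$ when applicable) satisfying $S \notin \mathcal{L}(H)$. I would split this into two sub-cases. When $|S| \ge 2$, I would exploit the structural fact that arithmetical numerical monoids $H = \langle n_0, n_0 + d, \dots, n_0 + kd \rangle$ (with $\gcd(n_0, d) = 1$ and $k$ sufficiently large) have sets of lengths forming arithmetic progressions of common difference exactly $d$; choosing $d \ge 2$ not matching the consecutive-difference pattern of $S$, and then tuning $(n_0, k)$ to make $|\mathcal{A}(H)| = s$, excludes $S$ from $\mathcal{L}(H)$. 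When $S = \{k\}$ is a singleton with $k \ge 3$ (or $k \ge 4$ in the case $s \le 5$), we need a numerical monoid in which every element of length $k$ admits an alternative factorization of some other length; the strategy is to distribute atoms densely enough that any length-$k$ factorization of a given element can be rewritten as a length-$(k \pm 1)$ factorization by swapping atoms.

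The principal obstacle lies in the singleton case $S = \{3\}$ with $s \ge 6$. The natural candidate $H = \langle n, n+1, \dots, 2n-1 \rangle$ fails: the element $4n-1$ lies in the length-$3$ range $[3n, 6n-3]$ but outside both the length-$2$ range $[2n, 4n-2]$ and the length-$4$ range $[4n, 8n-4]$, forcing $\mathsf{L}(4n-1) = \{3\}$. Producing, for each $s \ge 6$, a numerical monoid with exactly $s$ atoms and no such isolated length-$3$ element requires a more sophisticated placement of atoms in which every sum of three atoms admits an alternative factorization of length $2$ or $4$. I expect the bulk of the work to be carried out here, together with the complementary verification that, for $s \in \{2,3,4,5\}$, no such construction exists and $\{3\}$ is unavoidable in $\mathcal{L}(H)$.
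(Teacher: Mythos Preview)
The paper does not contain a proof of this statement. Theorem~\ref{thm:intersecion of SSL for NM} is quoted verbatim from \cite[Theorem~4.1]{GS17} (Geroldinger--Schmid) and is used as a black box: immediately after the statement the text moves on to ``In this section, we study the intersection of the systems of sets of lengths of atomic Puiseux monoids,'' and the theorem is only invoked later in the proofs of Corollary~\ref{cor:interseciton of non-dense PM} and Corollary~\ref{cor:intersection of nontrivial atomic PM}. There is therefore nothing in the present paper to compare your proposal against.

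That said, a few remarks on your outline may be useful. Your argument for $\{0\},\{1\},\{2\}\in\mathcal{L}(H)$ is correct and is essentially the same reasoning the present paper uses (in the broader Puiseux setting) in Proposition~\ref{prop:{2} is not in the SSL of a non-dense PM}. Your plan for the reverse inclusion is honest about where the difficulty lies, and your diagnosis is accurate: the crux of \cite[Theorem~4.1]{GS17} is precisely the construction, for each $s\ge 6$, of a numerical monoid with $s$ atoms in which no element has $\mathsf{L}(x)=\{3\}$, together with the complementary verification that this is impossible for $s\le 5$. Your proposed use of arithmetical numerical monoids to exclude non-singleton $S$ is in the right spirit, but be careful: controlling the set of distances alone does not rule out $S$ appearing as a set of lengths, since $S$ could still occur as a subprogression; you would also need to exclude $S$ by a size or endpoint argument. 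If you want to complete a self-contained proof, you will ultimately need the explicit families exhibited in \cite{GS17}.
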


In this section, we study the intersection of the systems of sets of lengths of atomic Puiseux monoids. We offer two versions of Theorem~\ref{thm:intersecion of SSL for NM}, namely Corollary~\ref{cor:interseciton of non-dense PM} and Corollary~\ref{cor:intersection of nontrivial atomic PM}. We will also construct a completely non-half-factorial Puiseux monoid.

\begin{prop} \label{prop:{2} is not in the SSL of a non-dense PM}
	Let $M$ be a Puiseux monoid such that $0$ is not a limit point of $M^\bullet$. Then $\{2\} \in \mathcal{L}(M)$.
\end{prop}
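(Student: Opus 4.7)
The plan is to produce the desired length set by exhibiting an element of the form $x = 2a$ for a carefully chosen atom $a$, and then ruling out factorizations of length different from $2$ via a pigeonhole-style argument based on the positive infimum of $\mathcal{A}(M)$.

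First, I would extract a uniform positive lower bound for the atoms. Since $0$ is not a limit point of $M^\bullet$, there exists $\delta > 0$ with $M^\bullet \cap (0,\delta) = \emptyset$, so every element of $M^\bullet$, and in particular every atom of $M$, is at least $\delta$. By Theorem~\ref{theo:sufficient condition for atomicity}, $M$ is a BF-monoid and hence atomic, so (assuming the implicit nontriviality of $M$) $\mathcal{A}(M)\neq\emptyset$, and we may set $\alpha := \inf\mathcal{A}(M) \ge \delta > 0$. By the definition of infimum, I would then pick any atom $a \in \mathcal{A}(M)$ with $a < \tfrac{3}{2}\alpha$ and let $x := 2a$.

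The factorization $x = a + a$ shows $2 \in \mathsf{L}(x)$. Conversely, for any factorization $x = b_1 + \cdots + b_n$ into atoms, each inequality $b_i \ge \alpha$ combines to give $n\alpha \le 2a < 3\alpha$, whence $n \le 2$; while $x\neq 0$ and $x\notin\mathcal{A}(M)$ (since $x = a+a$ is a sum of two non-invertible elements, $M$ being reduced) forces $n \ge 2$. Thus $\mathsf{L}(x) = \{2\}$.

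The main subtlety I anticipate is that the infimum $\alpha$ need not be attained by any atom—consider, for instance, $M = \langle (n+1)/n \mid n \in \mathbb{N}\rangle$, where $\alpha = 1$ but no atom equals $1$—so one cannot simply take $a$ to be ``the smallest atom''. The key observation is that attainment is unnecessary: any atom in the interval $[\alpha,\tfrac{3}{2}\alpha)$ suffices, because then $2a < 3\alpha$ is strictly less than the sum of any three atoms, which closes the argument.
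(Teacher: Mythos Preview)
Your proof is correct and follows essentially the same approach as the paper: both pick a small atom $a$ close to the infimum and argue that $2a$ cannot be a sum of three or more atoms. The only cosmetic difference is that the paper takes $q=\inf M^\bullet$ and splits into the cases $q\in M$ and $q\notin M$, whereas you work with $\alpha=\inf\mathcal{A}(M)$ (which equals $q$ since $M$ is atomic and reduced) and handle both cases uniformly via the choice $a<\tfrac{3}{2}\alpha$; this is a mild streamlining of the same argument.
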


\begin{proof}
	Let $q = \inf M^\bullet$. As $0$ is not a limit point of $M$, it follows that $M$ is atomic (by \cite[Theorem~3.10]{fG17a}) and $q \neq 0$. If $q \in M$, then $q$ must be an atom. In this case, the minimality of $q$ ensures that $\{2\} = \mathsf{L}(2q) \in \mathcal{L}(M)$. Suppose, otherwise, that $q \notin M$. In this case, there must be an atom $a$ such that $q < a < 3q/2$. As the sum of any three atoms is greater than $3q$, the element $2a$ only contains factorizations of lengths~$2$. Hence $\{2\} = \mathsf{L}(2a) \in \mathcal{L}(M)$.
\end{proof}

Because the family of Puiseux monoids strictly contains the family of numerical monoids, Theorem~\ref{thm:intersecion of SSL for NM} guarantees that the intersection of all systems of sets of lengths of nontrivial atomic Puiseux monoids is contained in $\{ \{0\}, \{1\}, \{2\} \}$, the following corollary is an immediate consequence of Proposition~\ref{prop:{2} is not in the SSL of a non-dense PM}.

\begin{cor} \label{cor:interseciton of non-dense PM}
	We have
	\[
		\bigcap \mathcal{L}(M) = \big\{\{0\}, \{1\}, \{2\} \big\},
	\]
	where the intersection is taking over all nontrivial atomic Puiseux monoids $M$ not having $0$ as a limit point.
\end{cor}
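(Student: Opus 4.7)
The plan is to split the asserted equality into the two standard inclusions; both reduce to facts that are essentially on the table. For the forward inclusion, every numerical monoid is a nontrivial atomic Puiseux monoid contained in $\nn_0$, so $0$ is trivially not a limit point of its positive part. Consequently, the family over which the corollary takes its intersection strictly contains the family of all numerical monoids, whence
\[
\bigcap \mathcal{L}(M) \,\subseteq\, \bigcap \mathcal{L}(N) \,=\, \big\{\{0\},\{1\},\{2\}\big\},
\]
the displayed equality being Theorem~\ref{thm:intersecion of SSL for NM}.

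For the reverse inclusion, I would argue that each of $\{0\}$, $\{1\}$, and $\{2\}$ belongs to $\mathcal{L}(M)$ whenever $M$ is a nontrivial atomic Puiseux monoid with $0$ not a limit point of $M^\bullet$. The first follows from the convention $\mathsf{Z}(0)=\{0\}$, giving $\mathsf{L}(0)=\{0\}$. The second follows because $M$ is nontrivial and atomic, so $\mathcal{A}(M)$ is nonempty, and any atom $a$ has $\mathsf{L}(a)=\{1\}$. The third is precisely Proposition~\ref{prop:{2} is not in the SSL of a non-dense PM}, whose hypothesis exactly matches that of the corollary.

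The substantive obstacle has already been handled in Proposition~\ref{prop:{2} is not in the SSL of a non-dense PM}: a Puiseux monoid without $0$ as a limit point has a positive infimum $q$, and in the delicate case $q \notin M$ the bound $a < 3q/2$ on the smallest atom forces the element $2a$ to be strictly less than the sum of any three atoms, so $\mathsf{L}(2a)=\{2\}$. With that proposition in hand, the corollary is a one-line assembly of the two inclusions above, which explains why the author calls it \emph{immediate}.
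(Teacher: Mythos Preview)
Your argument is correct and matches the paper's own reasoning: the forward inclusion comes from the fact that numerical monoids lie in the family under consideration together with Theorem~\ref{thm:intersecion of SSL for NM}, and the reverse inclusion is Proposition~\ref{prop:{2} is not in the SSL of a non-dense PM} (plus the trivial memberships of $\{0\}$ and $\{1\}$). There is nothing to add.
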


In contrast to Proposition~\ref{prop:{2} is not in the SSL of a non-dense PM}, we will construct an atomic Puiseux monoid that does not contain the singleton $\{2\}$ as a set of lengths.

\begin{lemma} \label{lem:union of minimally generated PMs}
	Let $\{M_n\}$ be a sequence of atomic Puiseux monoids satisfying that $\mathcal{A}(M_n) \subset \mathcal{A}(M_{n+1})$ for each $n \in \nn$. Then $M = \cup_{n \in \nn} M_n$ is an atomic Puiseux monoid and
	\[
		\mathcal{A}(M) = \bigcup_{n \in \nn} \mathcal{A}(M_n).
	\]
\end{lemma}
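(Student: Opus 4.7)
The plan is to verify the statement in five short steps, with the main conceptual point being that the hypothesis $\mathcal{A}(M_n)\subset\mathcal{A}(M_{n+1})$ is exactly what guarantees that atoms do not become decomposable further along the chain.

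First, I would observe that the hypothesis upgrades from the level of atoms to the level of monoids: since $M_n = \langle \mathcal{A}(M_n)\rangle \subseteq \langle \mathcal{A}(M_{n+1})\rangle = M_{n+1}$, the chain $\{M_n\}$ is increasing. From this, $M = \bigcup_n M_n$ is closed under addition (given $x,y \in M$, pick $k$ large enough that both $x,y \in M_k$), contains $0$, and sits inside $\qq_{\ge 0}$, so $M$ is a Puiseux monoid. In particular, $M$ is reduced, so $0$ is its only invertible element.

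Next I would prove the inclusion $\bigcup_n \mathcal{A}(M_n) \subseteq \mathcal{A}(M)$. Fix $a \in \mathcal{A}(M_n)$ and suppose $a = u+v$ for some $u,v \in M$. Then there exists $k \ge n$ such that $u,v \in M_k$, and iterating the hypothesis gives $a \in \mathcal{A}(M_n) \subseteq \mathcal{A}(M_{n+1}) \subseteq \cdots \subseteq \mathcal{A}(M_k)$. Since $M_k$ is reduced and $a$ is an atom of $M_k$, either $u=0$ or $v=0$; hence $a \in \mathcal{A}(M)$.

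For the reverse inclusion $\mathcal{A}(M) \subseteq \bigcup_n \mathcal{A}(M_n)$, take $a \in \mathcal{A}(M)$ and choose $n$ with $a \in M_n$. Because $M_n$ is atomic, we can write $a = a_1 + \cdots + a_k$ with each $a_i \in \mathcal{A}(M_n)$. By the previous step each $a_i$ is a nonzero (hence non-invertible) element of $M$, so the fact that $a$ is an atom of $M$ forces $k=1$, giving $a = a_1 \in \mathcal{A}(M_n)$. Combining both inclusions yields $\mathcal{A}(M) = \bigcup_n \mathcal{A}(M_n)$.

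Finally, atomicity of $M$ follows for free: any $x \in M$ lies in some $M_n$ and, by atomicity of $M_n$, decomposes as a sum of elements of $\mathcal{A}(M_n) \subseteq \mathcal{A}(M)$. The only genuinely delicate point is the argument that an atom of $M_n$ remains an atom of $M$; this is exactly where the strict hypothesis $\mathcal{A}(M_n)\subset \mathcal{A}(M_{n+1})$ is indispensable, since without it an atom could become decomposable at a later stage and the inclusion $\bigcup_n\mathcal{A}(M_n)\subseteq\mathcal{A}(M)$ would fail.
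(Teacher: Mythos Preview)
Your proof is correct and follows essentially the same approach as the paper: both arguments first upgrade the atom inclusion to a monoid inclusion to see that $M$ is a Puiseux monoid, and both verify $A:=\bigcup_n\mathcal{A}(M_n)\subseteq\mathcal{A}(M)$ by pushing a putative decomposition of $a\in\mathcal{A}(M_n)$ into some $M_k$ with $k\ge n$ and invoking $a\in\mathcal{A}(M_k)$. The only minor difference is in the reverse inclusion: the paper observes that $A$ generates $M$ and uses the general fact that in a reduced monoid $\mathcal{A}(M)$ is contained in every generating set, whereas you instead factor an atom of $M$ inside some $M_n$ and use irreducibility in $M$ to force the factorization to be trivial; both are immediate.
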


\begin{proof}
	Because $M_n$ is atomic for each $n \in \nn$, the inclusion $\mathcal{A}(M_n) \subset \mathcal{A}(M_{n+1})$ implies that $M_n \subset M_{n+1}$. As a consequence, $M$ is a Puiseux monoid. Let $A = \cup_{n \in \nn} \mathcal{A}(M_n)$. It is clear that $A$ generates $M$. Therefore $\mathcal{A}(M) \subset A$. On the other hand, take $a \in \mathcal{A}(M_n)$ for some $n \in \nn$. Since $\mathcal{A}(M) \subset A$, it follows that $\mathsf{Z}_M(a) \subseteq \mathsf{Z}_{M_\ell}(a)$ for some $\ell > n$. Now the fact that $a \in \mathcal{A}(M_n) \subset \mathcal{A}(M_\ell)$ guarantees that $a$ has only one factorization in $M$, which has length~$1$. Thus, $a \in \mathcal{A}(M)$. Because $\mathcal{A}(M) = A$, we finally conclude that $M$ is atomic.
\end{proof}

For $S \subseteq \qq_{> 0}$, we define
\[
	\mathsf{d}_p(S) = \{\text{prime } p \mid p \text{ divides } \mathsf{d}(s) \text{ for some } s \in S \}.
\]

\begin{theorem} \label{thm:PM with no set of lengths equal {2}}
	There exists an atomic Puiseux monoid $M$ such that $\{2\} \notin \mathcal{L}(M)$.
\end{theorem}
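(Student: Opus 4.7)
The goal is to construct an atomic Puiseux monoid $M$ in which every sum of two atoms has a factorization of length at least $3$. Since any $x \in M$ with $2 \in \mathsf{L}(x)$ is a sum of two atoms, this ensures $\mathsf{L}(x) \neq \{2\}$, and hence $\{2\} \notin \mathcal{L}(M)$. By Proposition~\ref{prop:{2} is not in the SSL of a non-dense PM}, such an $M$ must have $0$ as a limit point, and the construction below will produce atoms tending to~$0$.

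The monoid $M$ will be realized as an ascending union $M = \bigcup_{n \ge 1} M_n$ of finitely generated atomic Puiseux monoids with nested atom sets, so that Lemma~\ref{lem:union of minimally generated PMs} gives the atomicity of $M$ and the identity $\mathcal{A}(M) = \bigcup_n \mathcal{A}(M_n)$. Start with $M_1 := \langle 1\rangle$ and fix a diagonal enumeration that eventually schedules every unordered pair of atoms appearing throughout the construction. When the pair $(a,b) \in \mathcal{A}(M_n)^2$ is processed at stage $n$, put $s := a+b$, choose a prime $q_n$ that does not divide any denominator of an element of $\mathcal{A}(M_n)$ and is large enough (both so that $2/q_n < s/3$ and so that any integer coefficients arising in the atomicity analysis below lie below $q_n$), and set
\[
u_n := \frac{s}{3} + \frac{1}{q_n}, \qquad v_n := \frac{s}{3} - \frac{2}{q_n}, \qquad M_{n+1} := \langle \mathcal{A}(M_n) \cup \{u_n, v_n\}\rangle.
\]
Since $2u_n + v_n = s$, the element $s$ acquires a length-$3$ factorization in $M_{n+1} \subseteq M$.

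The main obstacle is to verify that $\mathcal{A}(M_{n+1}) = \mathcal{A}(M_n) \cup \{u_n, v_n\}$, which is what licenses the application of Lemma~\ref{lem:union of minimally generated PMs}. This is handled by $q_n$-adic valuation arguments modeled on those in the proof of Theorem~\ref{thm:PM with full system of sets of lengths}. The freshness of $q_n$ gives $v_{q_n}(u_n) = v_{q_n}(v_n) = -1$, while $v_{q_n}(c) \ge 0$ for every $c \in \mathcal{A}(M_n)$. For a candidate decomposition $y = \alpha u_n + \beta v_n + \sum_{c \in \mathcal{A}(M_n)} \gamma_c\, c$ of an old atom $y$, comparing $v_{q_n}$-values on both sides forces $\alpha \equiv 2\beta \pmod{q_n}$, and the size condition on $q_n$ upgrades this to $\alpha = 2\beta$; substituting $2u_n + v_n = s = a+b$ collapses the equation to $y = \beta(a+b) + \sum_c \gamma_c\, c$ inside $M_n$, and the atomicity of $M_n$ forces $\beta = 0$ together with a single $\gamma_c$ equal to $1$, so $y$ remains an atom. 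A parallel case analysis for $y \in \{u_n, v_n\}$ (exploiting that $u_n, v_n \notin M_n$ by $q_n$-freshness, together with size bounds on the admissible coefficients) shows that both $u_n$ and $v_n$ are atoms of $M_{n+1}$. Thus atomicity is preserved at every stage, every pair of atoms of $M$ is eventually processed, its sum has a length-$3$ factorization in $M$, and therefore $\{2\} \notin \mathcal{L}(M)$.
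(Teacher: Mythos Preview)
Your argument is correct in outline and all the ingredients can be made rigorous, but the construction is more elaborate than necessary and differs from the paper's. Both proofs build $M$ as an ascending union of finitely generated Puiseux monoids, invoke Lemma~\ref{lem:union of minimally generated PMs}, and certify minimality of generators at each stage via a fresh-prime valuation argument. The difference lies in what is adjoined for each pair $(a,b)$: the paper simply adds the single atom $(a+b)/p$ for a fresh odd prime $p$, so that $a+b = p\cdot\frac{a+b}{p}$ is a factorization of length $p\ge 3$; you instead manufacture two perturbed thirds $u_n=\tfrac{s}{3}+\tfrac{1}{q_n}$ and $v_n=\tfrac{s}{3}-\tfrac{2}{q_n}$ with $2u_n+v_n=s$. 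The paper's choice makes the atomicity verification shorter (one new generator, one valuation condition) and also feeds directly into Corollary~\ref{cor:completely non-half-factorial PMs exist}. Your construction buys nothing extra and costs a second case analysis, but it is a valid alternative.

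A few places in your write-up are thin and would need to be spelled out in a final version. First, the ``diagonal enumeration'' must be arranged so that the pair scheduled at stage $n$ already has both of its atoms in $\mathcal{A}(M_n)$; since you add two atoms per stage this is easy, but it should be said. Second, the phrase ``any integer coefficients arising in the atomicity analysis below lie below $q_n$'' hides a small computation: from $\alpha u_n+\beta v_n\le y$ and $u_n,v_n>s/4$ (for $q_n$ large) one gets $\alpha+\beta\le 4Y/s$ with $Y=\max\mathcal{A}(M_n)$, whence $|\alpha-2\beta|<q_n$ once $q_n>8Y/s$. Third, the ``parallel case analysis'' for $u_n,v_n$ is not quite parallel: for $u_n=\beta v_n+\sum\gamma_c c$ one uses $u_n/v_n<2$ to force $\beta\le 1$ and then rules out $\beta\in\{0,1\}$ by the $q_n$-valuation of $u_n$ and of $u_n-v_n=3/q_n$; for $v_n=\alpha u_n+\sum\gamma_c c$ one uses $u_n>v_n$ to force $\alpha=0$. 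These steps all go through, but the reader should see them.
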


\begin{proof}
	We will construct inductively a sequence of positive rational numbers $\{a_n\}$ and an increasing sequence of natural numbers $\{k_n\}$ so that each set $A_n = \{a_1, a_2, \dots, a_{k_n}\}$ minimally generates the Puiseux monoid $M_n = \langle A_n \rangle$. Take $(a_1, k_1) = (1,1)$ and $(a_2, k_2) = (2/3, 2)$. Clearly, $\mathcal{A}(M_1) = \{1\} = A_1$ and $\mathcal{A}(M_2) = \{1,2/3\} = A_2$. Now suppose that we have already found $k_1, \dots, k_n$ and $a_1, \dots, a_{k_n}$ satisfying the desired conditions. Let
	\[
		\bigg\{(s_{i1}, s_{i2}) \ \bigg{|} \ 1 \le i \le \binom{k_n + 1}{2} \bigg\}
	\]
	be an enumeration of the set $\{(a,b) \in [1, k_n] \times [1, k_n] \mid a \le b \}$. Now let us choose $\binom{k_n + 1}{2}$ odd prime numbers $p_1, \dots, p_{\binom{k_n + 1}{2}}$ such that $p_i \notin \mathsf{d}_p(M_n)$ and $p_i \nmid \mathsf{n}(a_{s_{i1}} + a_{s_{i2}})$ for any $i \in \{1, \dots, \binom{k_n + 1}{2}\}$. Finally, define
	\[
		a_{k_n + i} = \frac{a_{s_{i1}} + a_{s_{i2}}}{p_i}
	\]
	for every $i \in \big\{1, \dots, \binom{k_n + 1}{2} \big\}$, and take $k_{n+1} = k_n + \binom{k_n + 1}{2}$.
	
	We verify now that $A_{n+1} := \{a_1, a_2, \dots, a_{k_{n+1}}\}$ minimally generates the Puiseux monoid $M_{n+1} := \langle A_{n+1}\rangle$. Suppose, by contradiction, that for some $j \in \{1, \dots, k_{n+1}\}$,
	\begin{align} \label{eq:1}
		a_j = \sum_{i=1}^{k_{n+1}} c_i a_i,
	\end{align}
	where $c_1, \dots, c_{k_{n+1}} \in \nn_0$ and $c_j = 0$. Notice that if $j > k_n$, then we would obtain that the $p_t$-valuation (for $t = j - k_n$) of the right-hand side of (\ref{eq:1}) is nonnegative while the fact that $p \nmid \mathsf{n}(a_{s_{t1}} + a_{s_{t2}})$ implies that the $p_t$-valuation of the left-hand side is negative, a contradiction. Thus, assume that $j \le k_n$. Because the set
	\[
		\mathsf{d}_p(M_n) \cap \bigg\{p_i \ \bigg{|} \ 1 \le i \le \binom{k_n + 1}{2} \bigg\}
	\]
	is empty, for $i \in \big\{1, \dots, \binom{k_n+1}{2} \big\}$ the prime number $p_i$ divides the denominator of only one of the $a_m$'s on the right-hand side of (\ref{eq:1}), namely $a_{k_n + i}$. Therefore, after applying the $p_i$-adic valuation map to both sides of (\ref{eq:1}), we find that $p_i \mid c_{k_n + i}$. After simplifying all the possible $p_i$'s ($1 \le i \le \binom{k_n + 1}{2}$) in the denominators of the right-hand side of (\ref{eq:1}), it becomes a sum of elements of $A_n$ containing at least two summands, which contradicts the fact that $A_n$ generates $M_n$ minimally.
	
	Now define
	\[
		M = \bigcup_{n \in \nn} M_n.
	\]
	Because $\mathcal{A}(M_n) = A_n$ and $A_n \subset A_{n+1}$ for each $n \in \nn$, Lemma~\ref{lem:union of minimally generated PMs} implies that $M$ is an atomic Puiseux monoid with $\mathcal{A}(M) = \{a_n \mid n \in \nn\}$. Finally, we verify that $\{2\} \notin \mathcal{L}(M)$. It suffices to show that $|\mathsf{L}(x)| > 1$ for all $x \in M$ such that $2 \in \mathsf{L}(x)$. Take an element $x \in M$ such that $2 \in \mathsf{L}(x)$, and choose two indices $i,j \in \nn$ such that $x = a_i + a_j$. Let $m$ be the minimum natural number such that $a_i, a_j \in A_m$. By the way we constructed the sequence $\{A_n\}$, there exist $a \in A_{m+1}$ and an odd prime number $p$ such that
	\[
		a = \frac{a_i + a_j}{p}.
	\]
	Because $a \in \mathcal{A}(M)$, the element $x$ contains a factorization of length $p$, namely $pa$. Since $p > 2$, it follows that $|\mathsf{L}(x)| \ge |\{2,p\}| = \{2\}$, as desired.
\end{proof}

Recall that an atomic monoid $M$ is said to be completely non-half-factorial if for each $x \in M^\bullet \setminus \mathcal{A}(M)$ one has $|\mathsf{L}(x)| > 1$. It is not hard to argue that the monoid provided by Theorem~\ref{thm:PM with no set of lengths equal {2}} is completely non-half-factorial. Indeed, it satisfies condition (\ref{eq:a PM with extreme elementwise length}), which is stronger than completely non-half-factoriality.

\begin{cor} \label{cor:completely non-half-factorial PMs exist}
	There exists an atomic Puiseux monoid $M$ such that
	\begin{align} \label{eq:a PM with extreme elementwise length}
		\{|\mathsf{L}(x)| \ {\bf |} \ x \in M\} = \{1, \infty\}.
	\end{align}
\end{cor}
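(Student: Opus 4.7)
The plan is to take $M$ to be the atomic Puiseux monoid built in Theorem~\ref{thm:PM with no set of lengths equal {2}} and to verify directly that the set $\{|\mathsf{L}(x)| \mid x \in M\}$ has the claimed form. For $x=0$ we have $\mathsf{L}(0) = \{0\}$ and for $x \in \mathcal{A}(M)$ we have $\mathsf{L}(x) = \{1\}$, so each of these contributes the value $1$; hence the real task is showing $|\mathsf{L}(x)| = \infty$ for every $x \in M^\bullet \setminus \mathcal{A}(M)$.

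First I would isolate the key \emph{pair-quotient} feature of the inductive construction in the proof of Theorem~\ref{thm:PM with no set of lengths equal {2}}: if $b_1, b_2 \in \mathcal{A}(M)$, then both lie in $A_m$ for all sufficiently large $m$, and the unordered pair $\{b_1, b_2\}$ occurs among the pairs $(s_{i1}, s_{i2})$ enumerated at step $m$. Consequently there exists an odd prime $p \ge 3$ and an atom $a \in A_{m+1} \subseteq \mathcal{A}(M)$ satisfying
\[
    b_1 + b_2 \;=\; p \, a \quad \text{in } M.
\]
This is essentially the same observation invoked at the very end of the proof of Theorem~\ref{thm:PM with no set of lengths equal {2}}, and no new calculation is needed.

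Next I would exploit this to produce arbitrarily long factorizations of any non-atomic $x \in M^\bullet$. Since $M$ is atomic and $x$ is neither $0$ nor an atom, there is some $z \in \mathsf{Z}(x)$ with $|z| \ge 2$. Selecting any two atoms $b_1, b_2$ appearing in $z$ and applying the pair-quotient feature, I would replace the sub-word $b_1 + b_2$ by $p\,a$ to obtain a new factorization $z' \in \mathsf{Z}(x)$ with $|z'| = |z| + (p-2) \ge |z| + 1$. Iterating this length-boosting move yields a strictly increasing sequence of positive integers in $\mathsf{L}(x)$, so $|\mathsf{L}(x)| = \infty$, completing the proof.

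There is no genuine obstacle here; the only point that needs care is checking that the replacement really produces a valid factorization into atoms, which is immediate because $a \in \mathcal{A}(M)$ and the other $|z|-2$ entries of $z$ were atoms to begin with. The corollary is therefore just a packaging of the pair-quotient construction already used to kill the singleton $\{2\}$, now applied recursively instead of once.
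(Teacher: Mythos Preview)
Your proposal is correct and follows essentially the same approach as the paper: both take $M$ from Theorem~\ref{thm:PM with no set of lengths equal {2}} and exploit the pair-quotient feature of that construction to show every non-atom, nonzero element has infinite set of lengths. The only cosmetic difference is that the paper phrases the argument via divisibility (choosing $y = a_1 + a_2$ dividing $x$, asserting $|\mathsf{L}(y)| = \infty$, and then using $y \mid_M x$), whereas you make the underlying iteration explicit by repeatedly lengthening a factorization of $x$ itself; your version is arguably cleaner since it spells out the recursion that the paper's one-line appeal to ``the construction'' leaves implicit.
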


\begin{proof}
		Let $M$ be the Puiseux monoid constructed in Theorem~\ref{thm:PM with no set of lengths equal {2}}. Take $x \in M$ such that $|\mathsf{L}(x)| > 1$. This means that $x$ is neither zero nor an atom. Let $a_1$ and $a_2$ be two atoms of $M$ such that $y = a_1 + a_2$ divides $x$ in $M$. It follows by the construction of $M$ in the proof of Theorem~\ref{thm:PM with no set of lengths equal {2}} that $|\mathsf{L}(y)| = \infty$. The fact that $y$ divides $x$ in $M$ implies now that $|\mathsf{L}(x)| = \infty$, which concludes the proof.
\end{proof}

\begin{cor} \label{cor:intersection of nontrivial atomic PM}
	We have
	\[
		\bigcap \mathcal{L}(M) = \big\{\{0\}, \{1\} \big\},
	\]
	where the intersection is taking over all nontrivial atomic Puiseux monoids.
\end{cor}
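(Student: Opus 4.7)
The plan is to prove both inclusions separately, with the $\supseteq$ direction immediate and the $\subseteq$ direction witnessed by just two carefully chosen nontrivial atomic Puiseux monoids whose systems of sets of lengths already intersect in exactly $\{\{0\},\{1\}\}$.

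The easy inclusion $\{\{0\},\{1\}\} \subseteq \bigcap \mathcal{L}(M)$ is immediate: every nontrivial atomic Puiseux monoid $M$ contains $0$ (so $\mathsf{L}(0) = \{0\} \in \mathcal{L}(M)$) and contains at least one atom $a$, which in a reduced monoid admits only the trivial length-one factorization, giving $\mathsf{L}(a) = \{1\} \in \mathcal{L}(M)$.

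For the reverse inclusion I would use two witnesses with complementary behaviors. The first is $(\nn_0,+)$, a numerical (hence Puiseux) monoid whose unique atom is $1$ and for which $\mathsf{L}(n) = \{n\}$ for every $n \in \nn_0$; so every set in $\mathcal{L}((\nn_0,+))$ is a singleton, which rules out of the intersection every $S$ with $|S| \ge 2$. The second witness is the completely non-half-factorial atomic Puiseux monoid $M'$ produced by Corollary~\ref{cor:completely non-half-factorial PMs exist}, which satisfies $\{|\mathsf{L}(x)| : x \in M'\} = \{1,\infty\}$. The only elements of $M'$ giving $|\mathsf{L}(x)| = 1$ are $0$ and the atoms, yielding the singletons $\{0\}$ and $\{1\}$; every other set of lengths in $\mathcal{L}(M')$ is infinite. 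Consequently, no singleton $\{k\}$ with $k \ge 2$ belongs to $\mathcal{L}(M')$. Intersecting these two constraints, any $S$ lying in both $\mathcal{L}((\nn_0,+))$ and $\mathcal{L}(M')$ must be a singleton (from the first monoid) that is not of the form $\{k\}$ with $k \ge 2$ (from the second monoid), hence $S \in \{\{0\},\{1\}\}$. Since $\bigcap \mathcal{L}(M) \subseteq \mathcal{L}((\nn_0,+)) \cap \mathcal{L}(M')$, the reverse inclusion follows.

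There is no real obstacle once Corollary~\ref{cor:completely non-half-factorial PMs exist} is available; the whole argument reduces to the observation that a half-factorial Puiseux monoid and a completely non-half-factorial one impose complementary restrictions that together pin down $\{\{0\},\{1\}\}$.
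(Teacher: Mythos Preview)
Your proof is correct and takes a genuinely different route from the paper's. The paper's one-line argument combines Theorem~\ref{thm:intersecion of SSL for NM} (the Geroldinger--Schmid result that the intersection over all numerical monoids is already $\{\{0\},\{1\},\{2\}\}$) with Theorem~\ref{thm:PM with no set of lengths equal {2}}, which furnishes a single atomic Puiseux monoid whose system of sets of lengths omits $\{2\}$. You instead replace the appeal to Theorem~\ref{thm:intersecion of SSL for NM} by the elementary observation that $\mathcal{L}((\nn_0,+))$ consists only of singletons, and then invoke the stronger Corollary~\ref{cor:completely non-half-factorial PMs exist} (rather than just Theorem~\ref{thm:PM with no set of lengths equal {2}}) to knock out every singleton $\{k\}$ with $k\ge 2$ at once. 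Your approach is more self-contained in that it avoids importing the nontrivial external result on numerical monoids; the paper's version, by contrast, presents the corollary as an immediate sharpening of that numerical-monoid theorem via one additional witness. One small point: the bare statement of Corollary~\ref{cor:completely non-half-factorial PMs exist} only says that $|\mathsf{L}(x)|\in\{1,\infty\}$ for all $x$, not which elements realize the value $1$; your claim that these are exactly $0$ and the atoms relies on the complete non-half-factoriality noted just before the corollary (and established in its proof), so it is worth making that dependence explicit.
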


\begin{proof}
	This is an immediate consequence of Theorem~\ref{thm:intersecion of SSL for NM} and Theorem~\ref{thm:PM with no set of lengths equal {2}}. \\
\end{proof}

\vspace{3pt}

\section{Relation with the Goldbach's Conjecture} \label{sec:Goldbach conjecture}

We conclude this paper providing evidence that explicit computations of sets of lengths is an extremely hard problem even in particular cases of atomic Puiseux monoids. Specifically, we shall prove that finding $\mathcal{L}(M)$ for $M = \langle p \mid p \ \text{ is prime } \rangle$ is as hard as the famous longstanding Goldbach's conjecture.

\begin{conj} [Goldbach's conjecture]
	Every even $n \ge 4$ can be expressed as the sum of two prime numbers.
\end{conj}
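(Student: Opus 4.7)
The statement is Goldbach's binary (strong) conjecture, first formulated in the 1742 Goldbach--Euler correspondence and still open today. No proof is known to me or to the mathematical community, so any honest plan I outline will stop well short of completing a proof; what follows describes the standard analytic framework and pinpoints exactly where every known attack has stalled.

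My plan would be to attack the problem via the Hardy--Littlewood circle method. For an even $n \ge 4$, I would study the weighted representation count
\[
	r(n) = \sum_{p_1 + p_2 = n} (\log p_1)(\log p_2) = \int_0^1 S(\alpha)^2 e^{-2\pi i n\alpha}\,d\alpha,
\]
where $S(\alpha) = \sum_{p \le n}(\log p) e^{2\pi i p\alpha}$. I would then dissect $[0,1]$ into major arcs around rationals of small denominator and minor arcs $\mathfrak{m}$ elsewhere. On the major arcs, the Siegel--Walfisz theorem yields the expected contribution $\mathfrak{S}(n)\, n + o(n)$, with singular series $\mathfrak{S}(n) \gg 1$ whenever $n$ is even; it would then suffice to prove $\int_{\mathfrak{m}} |S(\alpha)|^2\,d\alpha = o(n)$ to conclude $r(n) > 0$, and the finitely many small even $n$ could be verified by direct computation.

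The hard part, which is where my plan (and every known plan) collapses, is the minor-arc bound. By Parseval, $\int_0^1 |S(\alpha)|^2\,d\alpha$ is of order $n\log n$, so genuine cancellation is required: one needs a pointwise estimate such as $\sup_{\alpha \in \mathfrak{m}} |S(\alpha)| = o(n/\log n)$, or an equivalent mean-square gain. Vinogradov's method supplies such savings for the cubic sum $S(\alpha)^3$, which is why the ternary Goldbach problem has been settled (completed by Helfgott in 2013), but in the binary problem one loses a decisive factor of $\log n$, and no bound of the required strength is available even under the generalized Riemann hypothesis. The strongest partial results I could cite are Chen's theorem (every sufficiently large even integer is $p + P_2$ with $P_2$ a product of at most two primes), the Montgomery--Vaughan density estimate for exceptional even integers, and Helfgott's ternary theorem; none of these closes the $\log n$ gap the binary problem demands. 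I therefore have no plan that would actually produce a proof, and the statement must remain here as the open conjecture it is.
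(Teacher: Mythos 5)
You are right that this statement is Goldbach's conjecture, which the paper states as an open conjecture and does not prove; it is invoked only to show that computing $\mathsf{L}(2)$ in the elementary Puiseux monoid is equivalent to this open problem. Your honest refusal to claim a proof, together with the survey of why the circle method's minor-arc bound fails in the binary case, is exactly the appropriate response here.
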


The following weaker version of the Goldbach's conjecture, called the Goldbach's weak conjecture, was proved in 2013 by Helfgott \cite{hH14}. \\

\begin{theorem} \label{thm:ternary Goldbach's conjecture}
	Every odd $n \ge 7$ can be written as the sum of three prime numbers.
\end{theorem}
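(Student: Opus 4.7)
The plan is to follow Vinogradov's circle method with the explicit refinements carried out by Helfgott. Define, for $\alpha \in [0,1)$, the weighted exponential sum over primes
\[
S(\alpha) = \sum_{p \le n} (\log p)\, e^{2\pi i p \alpha},
\]
so that the von Mangoldt-weighted count of ordered triples of primes summing to $n$ equals
\[
R(n) = \int_0^1 S(\alpha)^3 \, e^{-2\pi i n \alpha} \, d\alpha.
\]
It suffices to prove $R(n) > 0$ for every odd $n \ge 7$, since the contribution of triples involving a prime power (rather than a prime) is negligible and easy to strip off.

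The first step is to split $[0,1)$ into major arcs $\mathfrak{M}$ (small neighborhoods of rationals $a/q$ with $q$ no larger than a fixed power of $\log n$) and minor arcs $\mathfrak{m} = [0,1) \setminus \mathfrak{M}$. On $\mathfrak{M}$, one uses an effective Siegel--Walfisz style prime number theorem in arithmetic progressions to approximate $S(\alpha)$ by an integral of $e^{2\pi i \beta t}$ twisted by Ramanujan-type sums, and thereby extract the main term
\[
\int_{\mathfrak{M}} S(\alpha)^3 \, e^{-2\pi i n \alpha} \, d\alpha = \tfrac{1}{2}\, n^{2}\, \mathfrak{S}(n) + O\!\left(\frac{n^{2}}{(\log n)^{A}}\right),
\]
where $\mathfrak{S}(n)$ is the singular series, which is provably bounded below by an absolute positive constant whenever $n$ is odd.

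The second step, and the heart of the argument, is the minor-arc bound: show that $\int_{\mathfrak{m}} |S(\alpha)|^{3}\, d\alpha$ is strictly smaller than the main term above. The classical approach uses Vaughan's identity (or Vinogradov's bilinear decomposition) to bound $|S(\alpha)|$ pointwise on $\mathfrak{m}$ by something of order $n/(\log n)^{B}$ with $B$ as large as one pleases, and then combines this with Parseval's identity $\int_0^1 |S(\alpha)|^{2}\, d\alpha \ll n \log n$ to control the cube. Helfgott's contribution is to make every constant in both the major- and minor-arc estimates fully explicit, and small enough that the analytic lower bound $R(n) > 0$ holds for all odd $n$ above some concrete threshold $N_0$.

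The remaining step is to verify the theorem by direct computation for the finite range $7 \le n \le N_0$: for each such odd $n$ exhibit primes $p_1, p_2, p_3$ with $p_1+p_2+p_3 = n$, which is a finite check. The main obstacle is the minor-arc estimate --- making Vinogradov's cancellation both unconditional and effective enough that $N_0$ lands in a range where contemporary computation is feasible. Everything else (singular series positivity, major-arc asymptotics, the finite check) is comparatively routine once those explicit analytic bounds are in hand.
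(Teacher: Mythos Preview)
The paper does not supply its own proof of this theorem at all: it merely records the statement and attributes it to Helfgott \cite{hH14}. So there is nothing in the paper to compare your argument against beyond the citation.

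Your outline is a faithful high-level summary of the Hardy--Littlewood--Vinogradov circle method as refined by Helfgott: major/minor arc decomposition, singular series positivity for odd $n$, Vaughan-type bilinear bounds on the minor arcs with fully explicit constants, and a finite computer check up to the resulting threshold $N_0$. As a \emph{plan} it is correct and matches the cited source. That said, what you have written is a roadmap, not a proof: the substance of Helfgott's work lies precisely in the several hundred pages needed to make every constant explicit (smoothed sums, large-sieve inequalities with numerical constants, verified zero-free regions for Dirichlet $L$-functions, and a certified computation up to roughly $10^{27}$). None of that can be reproduced in a paragraph, and the paper wisely does not try; for the purposes of this article the theorem functions as a black-box input, and citing \cite{hH14} is the appropriate level of detail.
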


We call the Puiseux monoid $M = \langle 1/p \mid p \text{ is prime } \rangle$ the \emph{elementary Puiseux monoid}. It was proved in \cite{GG17} that $M$ is \emph{hereditarily atomic} (i.e., every submonoid of $M$ is atomic). On the other hand, it follows immediately that $M$ is not a BF-monoid. For every $n \in \nn$ and $k = 1,\dots,n$, set
\[
	S_{n,k} := \{(a_1, \dots, a_k) \in \nn^k \mid a_1 + \dots + a_k = n\}.
\]
In the following proposition we characterize the sets of lengths of the elements of $M \cap \nn$ in terms of the $S_{n,k}$'s.

\begin{prop} \label{prop:length of the naturals inside the primary PM}
	Let $M$ be the elementary Puiseux monoid. Then for each $n \in M \cap \nn$, we have that
	\begin{align} \label{eq:elementary primary main equation}
		\mathsf{L}(n) = \bigcup_{k=1}^n \bigg\{\sum_{i=1}^k a_i p_i \ \bigg{|} \ (a_1, \dots, a_k) \in S_{n,k} \ \text{ and } \ p_1, \dots, p_k \ \text{are primes}\bigg\}.
	\end{align}
\end{prop}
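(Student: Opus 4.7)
The plan is to establish the identity in~(\ref{eq:elementary primary main equation}) via two inclusions, with the $p$-adic valuation serving as the key tool in the nontrivial direction.

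For the inclusion $\supseteq$, the argument will be direct. Given $k \in \{1,\dots,n\}$, a tuple $(a_1,\dots,a_k) \in S_{n,k}$, and primes $p_1,\dots,p_k$ (not required to be distinct), I would consider the factorization of $n$ that uses $a_i p_i$ copies of the atom $1/p_i$ for each index $i$, consolidating multiplicities whenever two of the $p_i$ coincide. Its value is $\sum_i (a_i p_i)(1/p_i)=\sum_i a_i = n$, and its length is $\sum_i a_i p_i$, so this number lies in $\mathsf{L}(n)$.

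For the reverse inclusion $\subseteq$, I would fix $\ell \in \mathsf{L}(n)$ and any factorization realizing this length. Gathering terms by denominator, rewrite that factorization as $n = \sum_{i=1}^k c_i/q_i$ with $q_1,\dots,q_k$ pairwise distinct primes, $c_i \in \nn$, and $\ell = \sum c_i$. The crucial step is to apply the $q_j$-adic valuation $\mathsf{v}_{q_j}$ to both sides for each $j$: for $i \neq j$ the term $c_i/q_i$ has nonnegative $\mathsf{v}_{q_j}$-value, while $\mathsf{v}_{q_j}(c_j/q_j) = \mathsf{v}_{q_j}(c_j) - 1$. If $q_j \nmid c_j$, this last quantity equals $-1$ and is strictly smaller than the others; the ultrametric property recorded in Section~\ref{sec:background} then forces $\mathsf{v}_{q_j}(n) = -1$, contradicting $n \in \nn$. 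Hence $q_j \mid c_j$, and we may write $c_j = q_j b_j$ with $b_j \geq 1$.

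Substituting back, the value equation becomes $n = \sum_j b_j$, so $(b_1,\dots,b_k) \in S_{n,k}$ (and in particular $k \leq n$), while the length becomes $\ell = \sum_j c_j = \sum_j b_j q_j$, placing $\ell$ on the right-hand side of~(\ref{eq:elementary primary main equation}). The one genuinely subtle point is the valuation argument just described; everything else is routine bookkeeping once the factorization has been put into canonical distinct-denominator form.
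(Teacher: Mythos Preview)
Your proposal is correct and follows essentially the same approach as the paper: both directions are handled identically, with the $p$-adic valuation used to force $q_j \mid c_j$ in the nontrivial inclusion and the direct construction $\sum_i (a_i p_i)(1/p_i)$ in the other. Your treatment is slightly more explicit about the ultrametric step and about consolidating repeated primes, but the underlying argument is the same.
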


\begin{proof}
	Take $n \in M \cap \nn$, and denote the right-hand side of (\ref{eq:elementary primary main equation}) by $R$. Note that assuming the extra condition $p_1 < \dots < p_k$ does not change $R$. Suppose that $\ell \in \mathsf{L}(n)$. Then there exist $k \in \nn$ and distinct prime numbers $p_1, \dots, p_k$ such that
	\begin{align} \label{eq:elementary primary eq1}
		n = c_1 \frac 1{p_1} + \dots + c_k \frac 1{p_k},
	\end{align}
	where the right-hand side of (\ref{eq:elementary primary eq1}) has length $\ell = c_1 + \dots + c_k$ when seen as a factorization of $n$. Applying the $p_i$-valuation map in both sides of (\ref{eq:elementary primary eq1}), we find that $p_i \mid c_i$ for $i = 1, \dots, k$. Now setting $a_i := c_i/p_i$ for $i = 1, \dots, k$, we get that $a_1 + \dots + a_k \in S_{n,k}$ and, therefore, $\ell = a_1 p_1 + \dots + a_k p_k \in R$. Thus, $\mathsf{L}(n) \subseteq R$. Conversely, if $(a_1, \dots, a_k) \in S_{n,k}$ and $p_1, \dots, p_k$ are distinct prime numbers, then it immediately follows that
	\[
		z \ : \ \sum_{i=1}^k (a_i p_i) \frac 1{p_i}
	\]
	is a factorization of $n = a_1 + \dots + a_k \in M \cap \nn$ with $|z| = a_1 p_1 + \dots + a_k p_k$. Therefore $R \subseteq \mathsf{L}(n)$, which completes the proof. 
\end{proof}

A natural number is said to be a \emph{Goldbach's number} if it can be expressed as the sum of two prime numbers (not necessarily distinct). Let $\mathsf{G}$ denote the set of Goldbach's numbers. If $M$ is the elementary primary Puiseux monoid, then an explicit description of $\mathsf{L}(2)$ is as hard as the Goldbach's conjecture.

\begin{theorem}
	Let $M$ be the elementary primary Puiseux monoid. Then
	\begin{enumerate}
		\item $\mathsf{L}(2) = \mathsf{G}$. \vspace{2pt}
		\item $\mathsf{L}(3) = \zz_{\ge 7}$
	\end{enumerate}
\end{theorem}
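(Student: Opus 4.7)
The plan is to apply Proposition~\ref{prop:length of the naturals inside the primary PM} to expand $\mathsf{L}(2)$ and $\mathsf{L}(3)$ explicitly and then read off the claimed identifications. For $n = 2$ the only relevant index sets are $S_{2,1} = \{(2)\}$ and $S_{2,2} = \{(1,1)\}$, so the proposition produces $\mathsf{L}(2) = \{2p : p \text{ prime}\} \cup \{p + q : p, q \text{ distinct primes}\}$. The first piece supplies all sums $p + q$ with $p = q$ and the second supplies all sums with $p \neq q$, so their union is precisely the set of integers expressible as a sum of two primes, which by definition is $\mathsf{G}$. This settles part (1).

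For part (2) the same proposition with $n = 3$ together with the partitions $(3)$, $(1,2)\sim(2,1)$, and $(1,1,1)$ gives $\mathsf{L}(3) = \{3p\} \cup \{p + 2q : p \neq q \text{ primes}\} \cup \{p + q + r : p, q, r \text{ distinct primes}\}$. Reorganizing by the multiset of primes involved (one prime with multiplicity three; two distinct primes with multiplicities $2$ and $1$; three distinct primes each with multiplicity one), this union coincides with the set of all positive integers expressible as a sum of three primes, repetitions allowed. To establish $\mathsf{L}(3) = \zz_{\ge 7}$, I would then split the nontrivial inclusion $\zz_{\ge 7} \subseteq \mathsf{L}(3)$ by parity: for odd $n \ge 7$, Theorem~\ref{thm:ternary Goldbach's conjecture} (Helfgott) directly furnishes primes with $n = p + q + r$; for even $n \ge 8$, writing $n = 2 + (n-2)$ reduces the task to exhibiting $n - 2$ as a sum of two primes, whence $n = 2 + p + q$ lies in the third piece above.

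The hard part will be the even case in part (2): representing every even $n \ge 8$ as $2 + p + q$ is equivalent to the binary Goldbach conjecture for $n - 2$, which remains open. This is precisely the hardness evidence this section is designed to expose, paralleling the identification $\mathsf{L}(2) = \mathsf{G}$. Infinitely many even $n$ are still handled unconditionally, for instance whenever $(n-2)/2$ is an odd prime one has $n = 2 + 2 \cdot (n-2)/2 \in \mathsf{L}(3)$ via the $k = 2$ piece, and small cases admit explicit decompositions such as $8 = 2 + 3 + 3$, $10 = 2 + 3 + 5$, $12 = 2 + 5 + 5$. For the reverse inclusion $\mathsf{L}(3) \subseteq \zz_{\ge 7}$ one observes that the smallest possible sum of three primes is $2 + 2 + 2 = 6$ (realized by the factorization $3 = 6 \cdot \frac{1}{2}$), so the clean unconditional upper bound is actually $\mathsf{L}(3) \subseteq \zz_{\ge 6}$; reconciling the value $6$ against the stated lower bound of $7$ is a minor point to settle before finalizing the statement.
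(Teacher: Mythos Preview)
Your treatment of part~(1) is correct and is exactly what the paper does: both arguments reduce, via Proposition~\ref{prop:length of the naturals inside the primary PM}, to the observation that every factorization of $2$ has length $2p$ or $p_1+p_2$, whence $\mathsf{L}(2)=\mathsf{G}$.

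Your analysis of part~(2) is not only correct but more careful than the paper's. The paper merely asserts that part~(2) ``uses Theorem~\ref{thm:ternary Goldbach's conjecture}'' and leaves the details to the reader. You have identified two genuine problems with the claim $\mathsf{L}(3)=\zz_{\ge 7}$ as written. First, the factorization $3 = 6\cdot\frac{1}{2}$ shows $6\in\mathsf{L}(3)$ (equivalently, take $k=1$, $a_1=3$, $p_1=2$ in the proposition), so the right-hand side cannot be $\zz_{\ge 7}$; at best one should expect $\zz_{\ge 6}$. Second, and more seriously, Helfgott's theorem covers only \emph{odd} $n\ge 7$. For even $n\ge 8$, any representation $n=p+q+r$ with $p,q,r$ prime must have exactly one summand equal to $2$ (since $n>6$ rules out $2+2+2$), forcing $n-2$ to be a sum of two primes. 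This is precisely the binary Goldbach conjecture for $n-2$, which remains open. Thus the inclusion $\zz_{\ge 6}\subseteq\mathsf{L}(3)$ (or $\zz_{\ge 7}\subseteq\mathsf{L}(3)$) is not known unconditionally, and the paper's part~(2) cannot be proved from Theorem~\ref{thm:ternary Goldbach's conjecture} alone.

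In short: your proof of (1) matches the paper, and your reservations about (2) are well founded---the statement as printed is inaccurate on the lower endpoint and its full proof would require the strong Goldbach conjecture.
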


\begin{proof}
	First, we verify part (1). To see that $\mathsf{L}(2) \subseteq \mathsf{G}$, take $z \in \mathsf{Z}(2)$. If $z$ consists of copies of only one atom, say $1/p_i$, then $|z| = 2p_i \in \mathsf{G}$. Otherwise, by Proposition~\ref{prop:length of the naturals inside the primary PM}, $z$ is the formal sum of copies of two atoms, that is, $2 = a_1(1/p_1) + a_2(1/p_2)$ for distinct prime numbers $p_1$ and $p_2$ and positive coefficients $a_1$ and $a_2$. In this case, $p_1 \mid a_1$ and $p_2 \mid a_2$, which force $a_1 = p_1$ and $a_2 = p_2$. As a result, $|z| = a_1 + a_2 = p_1 + p_2 \in \mathsf{G}$. On the other hand, for each Goldbach number $p_1 + p_2$, the expression $p_1 (1/p_1) + p_2 (1/p_2)$ yields an element of $\mathsf{Z}(2)$ of length $p_1 + p_2$, which implies that $\mathsf{G} \subseteq \mathsf{L}(2)$. Thus, (1) follows. The proof of part (2) uses Theorem~\ref{thm:ternary Goldbach's conjecture}; however it follows similarly to the proof of part (1) and so it is left to the reader. \\
\end{proof}

\section{Acknowledgements}

	While working on this paper, the author was supported by the NSF-AGEP fellowship. The author would like to thank Alfred Geroldinger for helpful suggestions.
	
\vspace{10pt}

\end{document}